\newtheorem{thm}{Theorem}[section]
\newtheorem{rem}[thm]{Remark}
\newtheorem{lemma}[thm]{Lemma}
\newcommand{\p}{\partial}
\newcommand{\na}{\nabla}
\newcommand{\ddt}{\frac{d}{dt}}
\newcommand{\R}{\mathbb{R}}
\numberwithin{equation}{section}
\subjclass[2010]{35Q35,  76B03}
\keywords{Boussinesq equations,   global well-posedness,  velocity damping}
\begin{document}
\title[Global well-posedness for the  2D   Boussinesq  equations]{Global well-posedness for the  2D   Boussinesq  equations with a velocity damping term}

\author[  R. Wan]{ Renhui Wan}
\address{ School of Mathematical Sciences, Nanjing Normal University, Nanjing 210023, China}

\email{rhwanmath@163.com}

\vskip .2in
\begin{abstract}
In this paper, we prove global well-posedness of smooth solutions to the two-dimensional incompressible Boussinesq equations with only a velocity damping term when the initial data is close to an nontrivial equilibrium state $(0,x_2)$. As a by-product, under this equilibrium state, our result  gives a positive answer to the question proposed by \cite{ACWX} (see P.3597).
\end{abstract}

\maketitle

\vskip .2in
\section{Introduction}
\label{s1}
\vskip .3in
In this paper, we investigate the global existence and uniqueness of solutions to the two-dimensional (2D) incompressible   Boussinesq equations with a velocity damping term, namely,
\begin{equation} \label{ODBE}
\left\{
\begin{array}{l}
\partial_t u + u\cdot\nabla u +  \nu u +\nabla p = \Psi e_2,\  (t,x)\in \R^+\times \R^2 \\
\partial_t \Psi + u\cdot\nabla \Psi  =0,\\
{\rm div} u=0, \\
u|_{t=0} =u_0(x), \quad \Psi|_{t=0}=\Psi_0(x),
\end{array}
\right.
\end{equation}
where $u=(u_1,u_2)$ stand for the 2D velocity field, $p$ the pressure and     $\Psi$  the temperature in thermal convection or the density in geophysical flows, $\nu>0$ is a parameter  and $e_2=(0,1)$ is the unit vector in the vertical direction. $u_0$ satisfies ${\rm div}u_0=0$.
\vskip .1in
Many geophysical flows such as atmospheric fronts and
ocean circulations (see, e.g., \cite{CD99,Gill,Majda,Ped}) can be modeled
by the Boussinesq equations.  Mathematically we regard the 2D Boussinesq equations
as a lower-dimensional model of the 3D hydrodynamics equations.
 It is an open question whether the solutions to the inviscid case $(\nu=0)$ exist for all time or blow-up in a finite time.
 Indeed, adding only a velocity damping term seems not helpful to get the global solution even under the assumption that the initial data is small enough.
 \vskip .1in
 When adding the temperature damping term $\eta \Psi$ $(\eta>0)$ to (\ref{ODBE}), that is,
\begin{equation} \label{OODBE}
\left\{
\begin{array}{l}
\partial_t u + u\cdot\nabla u +  \nu u +\nabla p = \Psi e_2,\  (t,x)\in \R^+\times \R^2 \\
\partial_t \Psi + u\cdot\nabla \Psi+\eta\Psi  =0,\\
{\rm div} u=0,
\end{array}
\right.
\end{equation}
 Adhikar et al. \cite{ACWX} proved global well-posedness to (\ref{OODBE}) with the initial data satisfying
$$\|\nabla u_0 \|_{\dot{B}_{\infty,1}^0}<\min\{\frac{\nu}{2C_0},\frac{\eta}{C_0}\},\ \|\nabla \Psi_0 \|_{\dot{B}_{\infty,1}^0}<\frac{\nu}{2C_0}\|\nabla u_0 \|_{\dot{B}_{\infty,1}^0},$$
where $\dot{B}_{\infty,1}^0$ is the Besov space (see \cite{BCD} for the definition),  while they proposed that  global well-posedness for (\ref{ODBE})  is an open question  (see P. 3597 in that paper). Later, Wan \cite{W} obtained global solutions to (\ref{OODBE}) with large  initial velocity data by exploiting a new decomposition technique which is splitting the damped Navier-Stokes equations from (\ref{OODBE}). In fact, in \cite{W}, the initial data satisfies
\begin{equation*}
\begin{aligned}
\frac{C_0}{\sqrt{\nu\eta}}\|\Psi_0\|_{H^{m-1}}&
\exp\left\{C(\frac{1}{\nu}+\eta)\|u_0\|_{H^m}\exp\{\frac{C}{\nu}\|\|\Omega_0\|_{L^2\cap L^\infty}A(\nu,u_0,\Omega_0)\}\right\}\\
<&\min\{\nu,\ \frac{1}{\nu}\}\ (m>3)
\end{aligned}
\end{equation*}
for some constants $C>0$ and $C_0>0$, where $\Omega_0\stackrel{\rm def}{=}\nabla \times u_0$ and $A(\nu,u_0,\Omega_0)\stackrel{\rm def}{=}\ln (e+\frac{\|u_0\|_{H^m}}{\nu})\exp\left\{\frac{C\|\Omega_0\|_{L^2\cap L^\infty}}{\nu}\right\}$.  We refer \cite{WXY} and references therein for the related works.
\vskip .1in
Inspired by the studies of the MHD equations ( see, e.g., \cite{RWXZ,Wu15,Wu17}),
the contribution of this article is the global existence and uniqueness of  solutions of (\ref{ODBE}) with sufficiently smooth initial data $(u_0,\Psi_0)$ close to the equilibrium state $(0,x_2)$.
In fact, using $\Psi=\theta+x_2$,  we return to  seek the solutions of
the following system
\begin{equation} \label{DBE1}
\left\{
\begin{array}{l}
\partial_t \omega + u\cdot\nabla \omega +  \nu\omega  = \p_1\theta,  \\
\partial_t \theta + u\cdot\nabla \theta  =-u_2,\\
u=\nabla^\bot\Lambda^{-2}\omega, \\
\omega|_{t=0} =\omega_0(x), \quad \theta|_{t=0}=\theta_0(x),
\end{array}
\right.
\end{equation}
where $\na^\bot\stackrel{\rm def}{=} (\p_2,-\p_1)$ and
$\omega\stackrel{\rm def}{=}\p_1u_2-\p_2u_1$ is the vorticity of the velocity $u$. When $\nu=0$,  Elgindi-Widmayer \cite{EW} proved the long time existence of (\ref{DBE1}), that is, the lifespan of the associated solutions  is $\epsilon^{-\frac{4}{3}}$ if the initial data is of size $\epsilon$. We point out that   the initial data in $W^{s,1}(\R^2)$ is very crucial in the decay estimate of the semi-group $e^{\mathcal{R}_1t}$.  Later, Wan-Chen \cite{WC} obtained global well-posedness under the initial data near a  nontrivial equilibrium $(0,\kappa x_2)$ ($\kappa$ large enough), which is consistent with the corresponding work of \cite{EW} if $\kappa=1$. Recently, by establishing   some Strichartz type estimates of the semi-group $e^{\mathcal{R}_1t}$, Wan \cite{Wan17} obtained the same result as \cite{EW} without assuming that the initial data is in $W^{s,1}(\R^2)$.
\vskip .1in
For some convenience, we set $\nu=1$, that is, we consider
\begin{equation} \label{DBE}
\left\{
\begin{array}{l}
\partial_t \omega + u\cdot\nabla \omega +  \omega  = \p_1\theta,  \\
\partial_t \theta + u\cdot\nabla \theta  =-u_2,\\
u=\nabla^\bot\Lambda^{-2}\omega, \\
\omega|_{t=0} =\omega_0(x), \quad \theta|_{t=0}=\theta_0(x)
\end{array}
\right.
\end{equation}
 in the present work and use the  denotations below:
\begin{equation*}
\begin{aligned}
A(T)\stackrel{\rm def}{=}&\|\omega\|_{L^\infty_T(\mathbb{H}^s)}^2
+\|\theta\|_{L^\infty_T(\mathcal{H}^{s+1})}^2,\\
A_1(T)\stackrel{\rm def}{=}& \|\omega\|_{L^2_T(\mathbb{H}^s)}^2
+\|\p_1\theta\|_{L^2_T(\mathbb{H}^{s})}^2,\\
A(0)\stackrel{\rm def}{=}&\|\omega_0\|_{\mathbb{H}^s}^2
+\|\theta_0\|_{\mathcal{H}^{s+1}}^2.
\end{aligned}
\end{equation*}
where the definitions of the  function spaces $\mathcal{H}^s(\R^2)$ and $\mathbb{H}^s(\R^2)$ are given in section \ref{s2}.
 Now, we state the main result.
\begin{thm}\label{t1.1}
Let  $s\ge 5$.  Assume that
$$\omega_0\in \mathbb{H}^s(\R^2),\ \  \theta_0\in \mathcal{H}^{s+1}(\R^2).$$
There exists a positive constant   $C'$ such that    if
\begin{equation}\label{small}
C'A(0)<1
\end{equation}
then  system (\ref{DBE}) has  a unique global solution
$(\omega,\theta)$ satisfying
$$A(T)+A_1(T)\le C'A(0),$$
for all $T>0$.
\end{thm}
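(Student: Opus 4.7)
The plan is to establish uniform-in-$T$ a priori bounds $A(T)+A_1(T)\le C'A(0)$ by a modified-energy method; local well-posedness in $\mathbb{H}^s\times\mathcal{H}^{s+1}$ is classical and uniqueness will follow from a standard $L^2$-level difference estimate combined with Gr\"onwall, so the substance of the proof is the global control. The guiding structural observation is that the linearization of (\ref{DBE}) fully damps $\omega$ but dissipates $\theta$ only horizontally (its $\xi_1=0$ mode is preserved by the linear flow), which is exactly why $A_1$ controls $\partial_1\theta$ rather than $\theta$ itself and which dictates the form of the corrector to be added to the natural energy.

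First I would run the direct energy estimate: apply $\Lambda^s$ to the $\omega$-equation paired with $\Lambda^s\omega$, and $\Lambda^{s+1}$ to the $\theta$-equation paired with $\Lambda^{s+1}\theta$. The transport contributions become commutator remainders $N(t)$ via $\mathrm{div}\,u=0$, while the two linear cross terms $\int\Lambda^s\omega\,\Lambda^s\partial_1\theta$ (from $\partial_1\theta$ in the $\omega$-equation) and $-\int\Lambda^{s+1}\theta\,\Lambda^{s+1}u_2$ (from $-u_2$ in the $\theta$-equation) cancel \emph{exactly} thanks to $u_2=-\partial_1\Lambda^{-2}\omega$, as a short Parseval computation shows. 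This yields
\[
\tfrac{d}{dt}\bigl(\|\omega\|_{\mathbb{H}^s}^2+\|\theta\|_{\mathcal{H}^{s+1}}^2\bigr)+2\|\omega\|_{\mathbb{H}^s}^2\le |N(t)|,
\]
with no dissipation yet on $\partial_1\theta$. To extract it, I would compute the evolution of the cross quantity $\int\Lambda^s\omega\,\Lambda^s\partial_1\theta$; substituting both equations, the linear part reads
\[
\tfrac{d}{dt}\!\int\!\Lambda^s\omega\,\Lambda^s\partial_1\theta=\|\Lambda^s\partial_1\theta\|_{L^2}^2-\|R_1\Lambda^s\omega\|_{L^2}^2-\!\int\!\Lambda^s\omega\,\Lambda^s\partial_1\theta+\tilde N(t),
\]
where $R_1=\partial_1\Lambda^{-1}$ is bounded on $L^2$. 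A Young inequality then extracts $\tfrac12\|\partial_1\theta\|_{\mathbb{H}^s}^2$ at the cost of an acceptable multiple of $\|\omega\|_{\mathbb{H}^s}^2$. Setting $\tilde E(t):=\|\omega\|_{\mathbb{H}^s}^2+\|\theta\|_{\mathcal{H}^{s+1}}^2-2\delta\!\int\!\Lambda^s\omega\,\Lambda^s\partial_1\theta$ (plus analogous correctors at lower derivative levels if needed) with $\delta$ small so that $\tilde E\sim A(t)$, I arrive at
\[
\tfrac{d}{dt}\tilde E+c\bigl(\|\omega\|_{\mathbb{H}^s}^2+\|\partial_1\theta\|_{\mathbb{H}^s}^2\bigr)\le C\bigl(|N(t)|+|\tilde N(t)|\bigr).
\]

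Closing the estimate requires bounding the remainders by $\sqrt{A(T)}\bigl(\|\omega\|_{\mathbb{H}^s}^2+\|\partial_1\theta\|_{\mathbb{H}^s}^2\bigr)$ after time integration, and this is where the main difficulty resides, specifically in the top-order transport $\int\Lambda^{s+1}(u\cdot\nabla\theta)\,\Lambda^{s+1}\theta$. Because $\|\theta\|_{\mathcal{H}^{s+1}}$ enjoys no $L^2_t$ decay, a naive Kato--Ponce-type commutator bound only produces $\|\omega\|_{\mathbb{H}^s}\|\theta\|_{\mathcal{H}^{s+1}}^2$, which is not integrable uniformly in $T$. The fix is to split $u\cdot\nabla\theta=u_1\partial_1\theta+u_2\partial_2\theta$ and to exploit $u=\nabla^\bot\Lambda^{-2}\omega$, so that $u_2=-\partial_1\Lambda^{-2}\omega$ carries a horizontal derivative; integrating by parts in $x_1$ inside the $u_2\partial_2\theta$ piece forces every surviving factor to contain either $\omega$ or $\partial_1\theta$, both of which lie in $L^2_t\mathbb{H}^s$ by the dissipation in $A_1$. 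Collecting, $\int_0^T(|N|+|\tilde N|)\,dt\le C\sqrt{A(T)}\,A_1(T)$, hence
\[
\tilde E(T)+cA_1(T)\le \tilde E(0)+C\sqrt{A(T)}\,A_1(T),
\]
and a standard bootstrap argument, starting from the smallness (\ref{small}), delivers $A(T)+A_1(T)\le C'A(0)$ uniformly in $T$. Uniqueness of the global solution then follows from a routine $L^2$ energy estimate for the difference of two solutions combined with Gr\"onwall.
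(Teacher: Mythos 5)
Your overall architecture (natural energy plus a cross-term corrector $\int\Lambda^s\omega\,\Lambda^s\partial_1\theta$ to extract dissipation of $\partial_1\theta$, exact cancellation of the linear cross terms, bootstrap from smallness) matches the paper's Section 3. But the step where you claim to close the nonlinear remainder hides a genuine gap, and it is precisely the term the paper spends three further sections on. In the Leibniz expansion of $\int\partial_2^{s+1}(u\cdot\nabla\theta)\,\partial_2^{s+1}\theta\,dx$, the piece with exactly one derivative on $u_2$ is
\begin{equation*}
I_1=\int \partial_2 u_2\,(\partial_2^{s+1}\theta)^2\,dx .
\end{equation*}
Your fix --- write $\partial_2u_2=-\partial_1u_1$ and integrate by parts in $x_1$ --- works for the pieces with at least two derivatives on $u_2$, but applied to $I_1$ it produces $\int u_1\,\partial_1\partial_2^{s+1}\theta\,\partial_2^{s+1}\theta\,dx$, in which the factor $\partial_1\partial_2^{s+1}\theta$ carries $s+2$ derivatives of $\theta$: one more than $A_1$ controls, since $\|\partial_1\theta\|_{L^2_T(\mathbb{H}^s)}$ only governs $\partial_1$ composed with $s$ further derivatives. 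So the claim that ``every surviving factor contains either $\omega$ or $\partial_1\theta$, both of which lie in $L^2_t\mathbb{H}^s$'' fails exactly at top order, and $\int_0^T(|N|+|\tilde N|)\,dt\le C\sqrt{A(T)}\,A_1(T)$ does not follow.

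The paper's treatment of $I_1$ is the real content of the result: it substitutes $-u_2=\partial_t\theta+u\cdot\nabla\theta$, integrates by parts in time, and reduces the problem to bounding $\int_0^T\int u_2\,\partial_2^2\theta\,(\partial_2^{s+1}\theta)^2\,dx\,dt$, which in turn requires a bound on $\int_0^T\|u_2\|_{L^\infty}^{4/3}\,dt$. That bound is obtained by diagonalizing the linearized system, exploiting the decay of the resulting semigroup kernels on three frequency regions, and running a second energy estimate at negative regularity ($\omega\in\dot H^{-2}$, $\theta\in\dot H^{-1}$). Note that your argument never uses the negative-index parts of $\mathbb{H}^s$ and $\mathcal{H}^{s+1}$ in the hypotheses; the paper states explicitly that these conditions are essential for the $L^{4/3}_T L^\infty$ estimate of $u_2$, which is a strong indication that the proposal has bypassed the main obstruction rather than resolved it.
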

\begin{rem}\label{r1}
We shall point out that the solutions in Sobolev space do not grow over time, which is very different from the work \cite{Wu15} on the 2D MHD system with a velocity damping term, where the Sobolev norm of the solutions may grow over time, see P.2631 in that paper. Furthermore, our result gives a positive answer to the question proposed in \cite{ACWX} under the initial data near equilibrium state $(0,x_2)$.
\end{rem}
Let us making the following comments concerning this theorem:
\begin{enumerate}
\item[$\bullet$] By the standard energy method, we need to bound the integral $\int_0^T \|\p_2u_2\|_{L^\infty}dt$, which is very difficult, even if one applies the techniques in the following sections.  By utilising  $-u_2=\p_t\theta+u\cdot\na\theta$ and integrating by parts, the estimate of this integral is reduced to the estimate of $\mathfrak{K}(T)$ (see (\ref{K}) for the definition).

\item[$\bullet$] To get the estimate of $\mathfrak{K}(T)$, we shall estimate the integral
$\int_0^T\|u_2\|_{L^\infty}^\frac{4}{3}dt$, which strongly relies on the diagonalization process (section \ref{s5}) and the energy estimate II (section \ref{s6}). In addition, we shall use some unexpected techniques like (\ref{tech1}).

\item[$\bullet$] The unnatural condition in (\ref{small}) is that   the initial data satisfies $\omega_0\in \dot{H}^{-2}(\R^2)$ and $\theta_0\in \dot{H}^{-1}(\R^2)$. In fact,  these conditions play a very important role in the energy estimate II and the estimate of $\int_0^T\|u_2\|_{L^\infty}^\frac{4}{3}dt$.
\end{enumerate}
\vskip .1in
The present paper is structured as follows:\\
 In section \ref{s2},  we provide some definitions of  spaces and  several lemmas. Section \ref{s3} devotes to obtaining the  estimate of $E(T)+E_1(T)$ (see Lemma \ref{l3.1} for the definition).  Section \ref{s4} bounds the estimate of $\int_0^T\int \p_2u_2(\p_2^{s+1}\theta)^2 dxdt$. Section \ref{s5} provides the  estimate of $\int_0^T\|u_2\|_{L^\infty}^\frac{4}{3}dt$. In section \ref{s6}, we give the estimate of $E_2(T)$ (see (\ref{901}) for the definition). In the last section, we prove Theorem \ref{t1.1}.
\vskip .1in
Let us complete this section by describing the notations we shall use in this paper.\\
{\bf Notations} For two operators $A$ and $B$, we denote by $[A,B]=AB-BA$ the commutator between $A$ and $B$.  In some places of this paper,  we may use $L^p$, $\dot{H}^s$ and  $H^s$  to stand for  $L^p(\R^2)$, $\dot{H}^s(\R^2)$ and $H^s(\R^2)$, respectively.
$a\thickapprox b$  means $\mathfrak{C}^{-1}b\le a\le \mathfrak{C}b$ for some positive constant $\mathfrak{C}$. $\langle t\rangle$ means $1+t$.
The uniform constant $C$ may be different on different lines.   We use  $\|f\|_{L^p}$ to denote
the $L^p(\R^2)$ norm of $f$, and use $L^p_T(X)=L^p([0,T];X)$.
 We shall denote by $(a|b)$  the $L^2$ inner product
of $a$ and $b$, and
$$(a|b)_{\dot{H}^s}\stackrel{\rm def}{=}(\Lambda^s a|\Lambda^s b),
 \ {\rm and}\ \ (a|b)_{\dot{H}^m}\stackrel{\rm def}{=} (\p^m a|\p^mb)\  (m {\rm \ is\  an\  integer}),
$$
$$(a|b)_{H^s}\stackrel{\rm def}{=}(a|b)+(a|b)_{\dot{H}^s}.$$
\vskip .2in
\section{Preliminaries}
\label{s2}
\vskip .3in
In this section, we provide some essential  definitions,  propositions and lemmas.
\vskip .1in
The fractional Laplacian operator $\Lambda^\alpha=(-\Delta)^\frac{\alpha}{2}$  is defined through the Fourier transform, namely,
$$\widehat{\Lambda^\alpha f}(\xi)\stackrel{\rm def}{=}|\xi|^\alpha \widehat{f}(\xi),$$
where the Fourier transform is given by
$$\widehat{f}(\xi)\stackrel{\rm def}{=}\int_{\R^2}e^{-ix\cdot\xi}f(x)dx.$$
Sometimes, we also use $\mathcal{F}[f]$ to stand for the Fourier transform of $f$.
\vskip.1in
The norms $\|f\|_{\dot{H}^{\sigma_1}(\R^2)}$ ($\sigma_1\in \R$) and $\|f\|_{H^{\sigma_2}(\R^2)}$ $(\sigma_2>0)$ can be defined by
$$\|f\|_{\dot{H}^{\sigma_1}(\R^2)}\stackrel{\rm def}{=}\|\Lambda^{\sigma_1} f\|_{L^2(\R^2)}$$
and
$$
\|f\|_{H^{\sigma_2}(\R^2)}\stackrel{\rm def}{=}\|f\|_{L^2(\R^2)}+\|\Lambda^{\sigma_2} f\|_{L^2(\R^2)}.
$$
Especially, provided that  $\sigma_1\in\R$ and  $\sigma_2>0$ are integers,  we also have
$$\|f\|_{\dot{H}^{\sigma_1}(\R^2)}\stackrel{\rm def}{=}\|\p^{\sigma_1}f\|_{L^2(\R^2)}$$
$$\|f\|_{H^{\sigma_2}(\R^2)}\stackrel{\rm def}{=}\|f\|_{L^2(\R^2)}+\|\p^{\sigma_2} f\|_{L^2(\R^2)}.$$
For some convenience, we introduce two new functional spaces as follows:
$$\mathcal{H}^s(\R^2)\stackrel{\rm def}{=}\dot{H}^{-1}(\R^2)\cap \dot{H}^{s}(\R^2),$$
$$\mathbb{H}^s(\R^2)\stackrel{\rm def}{=}\dot{H}^{-2}(\R^2)\cap \dot{H}^{s}(\R^2).$$
\begin{lemma}($i$)\cite{KP}
Let $s>0$, $1\le p,r\le \infty,$ then
\begin{equation}\label{kp}
\|\Lambda^s( fg)\|_{L^p(\R^2)}\le C\left\{\|f\|_{L^{p_{1}}(\R^2)}
\|\Lambda^s g\|_{L^{p_2}(\R^2)}
+\|g\|_{L^{r_{1}}(\R^2)}\|\Lambda^sf\|_{L^{r_2}(\R^2)}\right\},
\end{equation}
where $1\le p_{1},r_{1}\le \infty$ such that $\frac{1}{p}=\frac{1}{p_{1}}+\frac{1}{p_{2}}=\frac{1}{r_{1}}+\frac{1}{r_{2}}$.\\
($ii$)\cite{KPV} Let $s>0$, and $1<p<\infty$,  then
\begin{equation}\label{kpv}
\|[\Lambda^s,f]g\|_{L^p(\R^2)}\le C\left\{\|\nabla f\|_{L^{p_1}(\R^2)}\|\Lambda^{s-1}g\|_{L^{p_2}(\R^2)}
+\|\Lambda^{s}f\|_{L^{p_3}(\R^2)}\|g\|_{L^{p_4}(\R^2)}\right\}
\end{equation}
where $1<p_2,p_3<\infty$ such that $\frac{1}{p}=\frac{1}{p_1}+\frac{1}{p_2}=\frac{1}{p_3}+\frac{1}{p_4}$.
\end{lemma}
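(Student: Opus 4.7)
The plan is to prove both Kato--Ponce type estimates via Bony's paraproduct decomposition combined with Littlewood--Paley theory. Introduce the homogeneous dyadic blocks $\Delta_j$ and partial sums $S_j=\sum_{k\le j-1}\Delta_k$, and split
$$fg = T_f g + T_g f + R(f,g),\quad T_f g\stackrel{\rm def}{=}\sum_j S_{j-3}f\,\Delta_j g,\quad R(f,g)\stackrel{\rm def}{=}\sum_{|j-k|\le 2}\Delta_j f\,\Delta_k g,$$
where $T_f g$ is the low--high paraproduct, $T_g f$ is its symmetric counterpart, and $R(f,g)$ is the diagonal remainder.

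For (i), apply $\Lambda^s$ to each piece and exploit the compact frequency support of each summand. Each block $S_{j-3}f\,\Delta_j g$ has Fourier support in an annulus of size $\sim 2^j$, so $\Lambda^s$ produces a factor $2^{js}$; combining the pointwise bound $|S_{j-3}f|\lesssim Mf$, the Fefferman--Stein vector-valued maximal inequality and the Littlewood--Paley characterization of fractional derivatives in $L^p$, one obtains $\|\Lambda^s T_f g\|_{L^p}\le C\|f\|_{L^{p_1}}\|\Lambda^s g\|_{L^{p_2}}$. The symmetric paraproduct $T_g f$ yields the second term in \eqref{kp}, and the remainder $R(f,g)$, whose blocks have Fourier support in a ball of radius $\sim 2^j$, is handled by Bernstein's inequality, distributing the $2^{js}$ weight onto either factor.

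For (ii), start from $[\Lambda^s,f]g = \Lambda^s(fg)-f\Lambda^s g$ and decompose via the same paraproducts. The crucial cancellation is in the low--high piece: $\Lambda^s T_f g - T_f\Lambda^s g = \sum_j [\Lambda^s,S_{j-3}f]\Delta_j g$. On the relevant support $|\xi|\sim 2^j$ and $|\eta|\ll 2^j$ the symbol $|\xi+\eta|^s-|\xi|^s$ is, by the mean value theorem, essentially $s|\xi|^{s-2}\xi\cdot\eta$, so the commutator behaves like a Coifman--Meyer bilinear operator applied to $\nabla(S_{j-3}f)$ and $\Lambda^{s-1}\Delta_j g$; summing in $j$ produces the $\|\nabla f\|_{L^{p_1}}\|\Lambda^{s-1}g\|_{L^{p_2}}$ term. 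The remaining paraproducts $T_g f$, $R(f,g)$ and their counterparts from $f\Lambda^s g$ contribute the $\|\Lambda^s f\|_{L^{p_3}}\|g\|_{L^{p_4}}$ term by the same almost-orthogonality plus Bernstein argument.

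The main obstacle is the rigorous Fourier-multiplier analysis required for the commutator piece: one must verify that the bilinear symbol obtained by dividing $|\xi+\eta|^s-|\xi|^s$ by $2^{j(s-1)}|\eta|$, after suitable localization to the low--high region, belongs (uniformly in $j$) to a Coifman--Meyer symbol class, which guarantees an $L^{p_1}\times L^{p_2}\to L^p$ bound for the associated bilinear multiplier. The strict restrictions $1<p,p_2,p_3<\infty$ enter precisely at this step to justify Calder\'on--Zygmund theory and the vector-valued maximal inequality; the product estimate (i) requires no such delicate cancellation and is comparatively routine.
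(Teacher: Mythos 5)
This lemma is not proved in the paper at all: it is stated as a quoted result with references to \cite{KP} and \cite{KPV}, so there is no in-paper argument to measure your proposal against. Taken on its own, your sketch follows the standard modern route --- Bony's paraproduct decomposition, almost-orthogonality of the frequency-localized pieces, and a Coifman--Meyer bilinear multiplier bound for the genuinely commuted low--high block, where the symbol cancellation $|\xi+\eta|^s-|\xi|^s\approx s|\xi|^{s-2}\xi\cdot\eta$ converts one derivative on $g$ into a gradient on $f$. This is a faithful outline of how these estimates are proved in the literature (the original arguments of Kato--Ponce and Kenig--Ponce--Vega decompose the bilinear symbol directly rather than through paraproducts, but rest on the same Coifman--Meyer theorem), and the pieces you assign to each term of the right-hand side are the correct ones; in particular, the high--low and remainder contributions from $\Lambda^s(fg)$ and from $f\Lambda^s g$ are indeed estimated separately, with the restriction $s>0$ needed to sum the low frequencies onto the $\|\Lambda^s f\|_{L^{p_3}}\|g\|_{L^{p_4}}$ term.

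One concrete caveat: as stated, part (i) permits $p=1$ and $p=\infty$ (and $p_1,r_1\in[1,\infty]$), but the machinery you invoke --- the Littlewood--Paley square-function characterization of $L^p$, the Fefferman--Stein vector-valued maximal inequality, and Calder\'on--Zygmund theory for the bilinear multipliers --- fails at these endpoints. The fractional Leibniz rule at $p=1$ and $p=\infty$ is true but requires genuinely different and much more recent arguments, so your proof as written establishes (i) only for $1<p<\infty$. Since the paper only ever applies the lemma with interior exponents (typically $p=2$), this does not affect anything downstream, but you should either restrict the stated range or flag the endpoint cases as requiring a separate argument.
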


\begin{lemma}\label{l2.1}
If the functions $f_i$ $(i=1,2,3,4,5)$ satisfy that
$$\p_1f_j\in L^2_T(H^1(\R^2))\ f_j\in L^\infty_T(H^1(\R^2)),\ j=1,2,$$
$$f_3\in L^p_T(L^\infty(\R^2)),\ f_4\in L^q_T(L^2(\R^2)),\ f_5\in L^\infty_T(L^2(\R^2)),$$
where $\frac{1}{p}+\frac{1}{q}=\frac{1}{2}$, $(p,q)\in [2,\infty]$,
then there holds
\begin{equation*}
\begin{aligned}
&\ \ \ |\int_0^T\int_{\R^2}f_1(x)f_2(x)f_3(x)f_4(x)f_5(x)dxdt|\\
\le&
C\|(\p_1f_1,\p_1f_2)\|_{L^2_T(H^1)}
\|(f_1,f_2)\|_{L^\infty_T(H^1)}\|f_3\|_{L^p_T(L^\infty)}
\|f_4\|_{L^q_T(L^2)}\|f_5\|_{L^\infty_T(L^2)}.
\end{aligned}
\end{equation*}
\end{lemma}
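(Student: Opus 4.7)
The plan is three steps: first an anisotropic Agmon-type $L^\infty$ bound for $f_1,f_2$, then H\"older in space to get a pointwise-in-$t$ estimate, then H\"older in time using the hypothesis $\tfrac{1}{p}+\tfrac{1}{q}=\tfrac{1}{2}$. The organizing observation is that in two dimensions, $H^1$-regularity together with a single extra $\p_1$-derivative is enough to control $L^\infty$; these are precisely the norms available on $f_1$ and $f_2$, whereas $f_4,f_5$ must be absorbed purely through their $L^2$ norm.

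First I would prove the pointwise embedding
\[
\|g\|_{L^\infty(\R^2)}\le C\,\|g\|_{H^1}^{1/2}\|\p_1 g\|_{H^1}^{1/2}.
\]
The one-dimensional Sobolev inequality $\|h\|_{L^\infty(\R)}^2\le 2\|h\|_{L^2(\R)}\|h'\|_{L^2(\R)}$ applied in $x_1$ for each fixed $x_2$ gives $\|g\|_{L^\infty}^2\le 2\sup_{x_2}\|g(\cdot,x_2)\|_{L^2_{x_1}}\|\p_1 g(\cdot,x_2)\|_{L^2_{x_1}}$, and the same 1D inequality applied in $x_2$ to the functions $x_2\mapsto\|g(\cdot,x_2)\|_{L^2_{x_1}}^2$ and $x_2\mapsto\|\p_1 g(\cdot,x_2)\|_{L^2_{x_1}}^2$ yields the anisotropic bound
\[
\|g\|_{L^\infty}^4\le 16\,\|g\|_{L^2}\|\p_2 g\|_{L^2}\|\p_1 g\|_{L^2}\|\p_1\p_2 g\|_{L^2},
\]
after which AM--GM collapses the right-hand side into $\|g\|_{H^1}^2\|\p_1 g\|_{H^1}^2$.

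For each fixed $t$, H\"older in space with exponents $(\infty,\infty,\infty,2,2)$ on $(f_1,\dots,f_5)$, combined with the embedding above applied to $f_1,f_2$, gives
\[
\int_{\R^2}|f_1f_2f_3f_4f_5|\,dx\le C\prod_{j=1,2}\|f_j\|_{H^1}^{1/2}\|\p_1 f_j\|_{H^1}^{1/2}\cdot\|f_3\|_{L^\infty}\|f_4\|_{L^2}\|f_5\|_{L^2}.
\]
I would then pull the $L^\infty_T$-factors $\|f_j\|_{L^\infty_T(H^1)}^{1/2}$ ($j=1,2$) and $\|f_5\|_{L^\infty_T(L^2)}$ out of the time integral. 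The remaining product of four time-dependent scalars is handled by H\"older in $t$ with exponents $(4,4,p,q)$, which is admissible because
\[
\tfrac{1}{4}+\tfrac{1}{4}+\tfrac{1}{p}+\tfrac{1}{q}=\tfrac12+\tfrac12=1
\]
by hypothesis. A final AM--GM regroups $\prod_{j=1,2}\|f_j\|_{L^\infty_T(H^1)}^{1/2}\|\p_1 f_j\|_{L^2_T(H^1)}^{1/2}$ into the advertised $\|(f_1,f_2)\|_{L^\infty_T(H^1)}\|(\p_1 f_1,\p_1 f_2)\|_{L^2_T(H^1)}$.

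The only genuine obstacle is identifying the correct anisotropic embedding. Since neither $f_4$ nor $f_5$ carries any derivative, all of the smoothing needed to absorb the three other factors must be extracted from $f_1,f_2$, and it has to be compatible with the single-direction $\p_1$-smoothing available in the hypotheses — a full $H^2$-type bound is not allowed. Once the embedding $\|g\|_{L^\infty}\lesssim\|g\|_{H^1}^{1/2}\|\p_1 g\|_{H^1}^{1/2}$ is in hand, both applications of H\"older are routine, and the balance $\tfrac1p+\tfrac1q=\tfrac12$ falls out exactly.
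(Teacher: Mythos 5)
Your proposal is correct and follows essentially the same route as the paper: the anisotropic embedding $\|g\|_{L^\infty(\R^2)}\le C\|g\|_{H^1}^{1/2}\|\p_1 g\|_{H^1}^{1/2}$ you isolate is exactly the combination of $\|f\|_{L^\infty_{x_1}}\le C\|f\|_{L^2_{x_1}}^{1/2}\|\p_1 f\|_{L^2_{x_1}}^{1/2}$, Minkowski's inequality, and $H^1(\R)\hookrightarrow L^\infty(\R)$ in $x_2$ that the paper applies to $f_1,f_2$, followed by the same H\"older steps in space and in time with exponents $(4,4,p,q)$.
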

\begin{proof}
By using
$$\|f\|_{L^\infty_{x_1}}\le C\|f\|_{L^2_{x_1}}^\frac{1}{2}\|\p_1f\|_{L^2_{x_1}}^\frac{1}{2},$$
Minkowski's inequality for integrals
and the embedding relation $H^1(\R)\hookrightarrow L^\infty(\R)$, we have
\begin{equation*}
\begin{aligned}
&\ \ \ |\int_{\R^2}f_1(x)f_2(x)f_3(x)f_4(x)f_5(x)dx |\\
\le& \ |\int_{x_2\in \R}\|f_4\|_{L^2_{x_1}}\|f_5\|_{L^2_{x_1}}\prod_{i=1}^3\|f_i\|_{L^\infty_{x_1}}d x_2|\\
\le& \ C\|f_3\|_{L^\infty(\R^2)}\|f_4\|_{L^2(\R^2)}\|f_5\|_{L^2(\R^2)}
\left\|\prod_{i=1}^2\|f_i\|_{L^2_{x_1}}^\frac{1}{2}
\|\p_1f_i\|_{L^2_{x_1}}^\frac{1}{2}
\right\|_{L^\infty_{x_2}}\\
\le&\ C \|f_3\|_{L^\infty(\R^2)}
\|f_4\|_{L^2(\R^2)}\|f_5\|_{L^2(\R^2)}\\
&\times(\|f_1\|_{H^1(\R^2)}
+\|f_2\|_{H^1(\R^2)})(\|\p_1f_1\|_{H^1(\R^2)}+\|\p_1f_2\|_{H^1(\R^2)}).
\end{aligned}
\end{equation*}
Integrating the  inequality above in time, and then using H\"{o}lder's inequality  can yield the desired result.
\end{proof}
\vskip .2in
\section{Energy estimate I}
\label{s3}
\vskip .3in
In this section, we prove some a priori estimates, which are given by the following lemma:
\begin{lemma}\label{l3.1}
Let $(\omega,\theta)$ be sufficiently smooth functions which solves
(\ref{DBE}) and satisfy $(\omega_0,\theta_0)\in
\mathcal{H}^s(\R^2)\times H^s(\R^2)$, then there holds
\begin{equation}\label{E0}
E(T)+E_1(T)\le C\left(E(0)+E(T)^\frac{1}{2}E_1(T)+\left|\int_0^T I_1(t)dt\right|\right),
\end{equation}
where
\begin{equation*}
\begin{aligned}
E(T)\stackrel{\rm def}{=}&\|\omega\|_{L^\infty_T(\mathcal{H}^s)}^2
+\|\theta\|_{L^\infty_T(H^{s+1})}^2,\\
 E_1(T)\stackrel{\rm def}{=}& \|\omega\|_{L^2_T(\mathcal{H}^s)}^2+\|\p_1\theta\|_{L^2_T(H^s)}^2,\\
E(0)\stackrel{\rm def}{=}&\|\omega_0\|_{\mathcal{H}^s}^2+\|\theta_0\|_{H^{s+1}}^2,\\
I_1\stackrel{\rm def}{=}&\int \p_2 u_2\  (\p^{s+1}_2\theta)^2dx.
\end{aligned}
\end{equation*}
and  $C$ is a positive constant independent of $T$.
\end{lemma}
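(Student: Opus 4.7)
The plan is to prove (\ref{E0}) by combining four energy identities: standard $L^2$-type estimates at the ``low'' level ($\omega$ in $\dot H^{-1}$, $\theta$ in $L^2$) and at the ``high'' level ($\omega$ in $\dot H^s$, $\theta$ in $\dot H^{s+1}$), together with two cross-pairings of the $\omega$-equation against $\partial_1\theta$ (and against $\Lambda^s\partial_1\theta$) that manufacture the horizontal dissipation $\|\partial_1\theta\|_{L^2_T H^s}^2$, which the velocity damping alone does not furnish.

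For the standard estimates, I apply the appropriate operator ($\Lambda^{-1}$ or $\Lambda^s$ to the $\omega$-equation; the identity or $\Lambda^{s+1}$ to the $\theta$-equation) and pair with the same operator applied to the unknown. The transport term on the $\theta$-side vanishes by $\nabla\cdot u=0$. Crucially, the Biot--Savart identity $u_2=-\partial_1\Lambda^{-2}\omega$ makes the two cross terms $(\Lambda^s\partial_1\theta,\Lambda^s\omega)$ and $-(\Lambda^{s+1}u_2,\Lambda^{s+1}\theta)=(\partial_1\Lambda^{s-1}\omega,\Lambda^{s+1}\theta)$ cancel exactly in Fourier (the integrand is odd in $\xi_1$); the same cancellation occurs at the $(\dot H^{-1},L^2)$ level. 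What survives is the damping dissipation $\|\omega\|_{\mathcal H^s}^2$ plus various cubic commutator contributions. Each of the latter is bounded by $CE(T)^{1/2}E_1(T)$ through the Kato--Ponce and Kenig--Ponce--Vega inequalities of Section~\ref{s2}, the Sobolev embedding $H^s\hookrightarrow W^{1,\infty}(\R^2)$ (valid for $s\ge 5$), and Hölder in time organized as $L^\infty_T\cdot L^2_T\cdot L^2_T$. The single exception is the $k=1$ piece of the commutator $[\partial_2^{s+1},u\cdot\nabla]\theta$ paired with $\partial_2^{s+1}\theta$, which produces $(s+1)I_1$ and must be left on the right-hand side.

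To generate $\partial_1\theta$-dissipation, I take the $L^2$ inner product of the $\omega$-equation with $\partial_1\theta$, and of the $\Lambda^s$-applied $\omega$-equation with $\Lambda^s\partial_1\theta$. An integration by parts in $x_1$, substitution of $\partial_t\theta=-u_2-u\cdot\nabla\theta$, and the positive identity
\begin{equation*}
(\omega\,|\,\partial_1 u_2)=\|\Lambda^{-1}\partial_1\omega\|_{L^2}^2\ge 0
\end{equation*}
(immediate from $u_2=-\partial_1\Lambda^{-2}\omega$ and Plancherel) lead to cross identities of the shape
\begin{equation*}
\|\partial_1\theta\|_{L^2}^2=\tfrac{d}{dt}(\omega\,|\,\partial_1\theta)+\|\Lambda^{-1}\partial_1\omega\|_{L^2}^2+(\omega\,|\,\partial_1\theta)+\mathrm{NL},
\end{equation*}
and analogously at level $\dot H^s$. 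After integrating in time, the boundary terms are $\le C(E(T)+E(0))$ by Cauchy--Schwarz; the $\|\Lambda^{-1}\partial_1\omega\|_{L^2}^2$ contribution and the linear residual $(\omega\,|\,\partial_1\theta)$ are absorbed into $\|\omega\|_{L^2_T\mathcal H^s}^2$ and (via Young's inequality) into $\|\partial_1\theta\|_{L^2}^2$ on the left; and the remaining nonlinear terms are again bounded by $CE(T)^{1/2}E_1(T)$.

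I then sum the four identities, weighting the two cross-pairings by a small parameter $\delta>0$ chosen so that the unwanted boundary contribution $\delta C E(T)$ can be absorbed into the $E(T)$ already present on the left. This produces (\ref{E0}). The main obstacle I anticipate is organizing every cubic nonlinear integral into the Hölder-in-time pattern $L^\infty_T\cdot L^2_T\cdot L^2_T$ that matches $E(T)^{1/2}E_1(T)$, which forces one to allocate the two $L^2_T$ slots exclusively to factors controlled by $E_1$, namely $\omega$ (in $L^2_T\mathcal H^s$) and $\partial_1\theta$ (in $L^2_T H^s$). The term $I_1$ is precisely the unique cubic integral that cannot be cast in this pattern---$\partial_2\theta$ carries no $L^2_T$ bound from $E_1$ and $\partial_2 u_2$ carries no $L^1_T L^\infty$ control from the damping---so it is isolated on the right and deferred to the next section.
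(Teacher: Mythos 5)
Your proposal follows the paper's proof essentially step for step: the same exact linear cancellation between $(\partial_1\theta|\omega)_{\dot H^s}$ and $(\partial_1\Lambda^{-2}\omega|\theta)_{\dot H^{s+1}}$, the same cross-pairing of the $\omega$-equation with $\partial_1\theta$ (using $\partial_t\theta=-u_2-u\cdot\nabla\theta$ and $(\omega|\partial_1 u_2)=\|\partial_1\Lambda^{-1}\omega\|_{L^2}^2$) to manufacture the horizontal dissipation, and the same isolation of $I_1$; your small-$\delta$ weighting of the cross pairing is just the reciprocal of the paper's multiplication of the main energy inequality by the large constant $2C_0$. The one place where your write-up is thinner than it needs to be is the claim that every cubic term other than $I_1$ falls directly into the $L^\infty_T\cdot L^2_T\cdot L^2_T$ pattern via Kato--Ponce and H\"older. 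For the purely vertical part of $-(u\cdot\nabla\theta|\theta)_{\dot H^{s+1}}$, the terms $\int\partial_2^{\alpha}u_2\,\partial_2^{s+2-\alpha}\theta\,\partial_2^{s+1}\theta\,dx$ with $2\le\alpha\le s+1$ contain no factor of $\partial_1\theta$ and only one velocity factor, so a direct H\"older estimate yields only an $L^2_T\cdot L^\infty_T\cdot L^\infty_T$ product, which grows in $T$; one must first write $\partial_2 u_2=-\partial_1 u_1$ and integrate by parts twice (once in $x_1$, then once more to keep the total order of derivatives landing on any single $\theta$ factor at $s+1$) to transfer the $\partial_1$ onto a $\theta$ factor and recover the bound $C\|\omega\|_{L^2_T(\mathcal{H}^s)}\|\partial_1\theta\|_{L^2_T(H^s)}\|\theta\|_{L^\infty_T(H^{s+1})}$. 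This manipulation is the technical heart of showing that $I_1$ (the $\alpha=1$ term, for which the same trick fails because the transferred $\partial_1$ necessarily lands on a copy of $\partial_2^{s+1}\theta$) is the \emph{only} irreducible remainder; your final paragraph states the right allocation principle, but the proof needs this step made explicit.
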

\begin{rem}\label{r3}
$\omega_0\in \dot{H}^{-1}$ is a natural condition, which is equal to $u_0\in L^2$. Thanks to this condition, together with $\theta_0\in L^2$, we can get the global  kinetic energy.
\end{rem}
\begin{proof}
Using the cancelation relations
$$(u\cdot\na \theta|\theta)=(u\cdot\na \omega|\omega)_{\dot{H}^{-1}}=0$$
and
$$(\p_1\theta|\omega)_{\dot{H}^s}
+(\p_1\Lambda^{-2}\omega|\theta)_{\dot{H}^{s+1}}=0,$$
we can get
\begin{equation}\label{3.1}
\begin{aligned}
\ddt(\|\omega\|_{\mathcal{H}^s}^2+\|\theta\|_{H^{s+1}}^2)+\|\omega\|_{\mathcal{H}^s}^2
=&-(u\cdot\nabla \omega|\omega)_{\mathcal{H}^s}-(u\cdot\nabla \theta|\theta)_{H^{s+1}}\\
=&-(u\cdot\nabla \omega|\omega)_{\dot{H}^s}\underbrace{-(u\cdot\nabla \theta|\theta)_{\dot{H}^{s+1}}}_{\stackrel{\rm def}{=}I}.
\end{aligned}
\end{equation}
Using $(u\cdot\na \Lambda^s\omega|\Lambda^s\omega)=0$ and
(\ref{kp}), we have
\begin{equation*}
\begin{aligned}
-(u\cdot\nabla \omega|\omega)_{\dot{H}^s}
=&\ -\int [\Lambda^s, u\cdot \nabla]  \omega \Lambda^s \omega dx\\
\le&\ \|[\Lambda^s, u\cdot \nabla]  \omega\|_{L^2}\|\Lambda^s \omega\|_{L^2}\\
\le&\ C\|\na u\|_{L^\infty}\|\Lambda^s\omega\|_{L^2}^2
+C\|\na \omega\|_{L^4}\|\Lambda^s u\|_{L^4} \|\Lambda^s\omega\|_{L^2}\\
\le&\  C\|\omega\|_{H^s}^3.
\end{aligned}
\end{equation*}
For the  estimate of $I$, using $(u\cdot\na \p^{s+1}\theta|\p^{s+1}\theta)=0$, we  obtain
$$I
=-\int \p^{s+1}(u\cdot\nabla \theta)\ \p^{s+1}\theta dx
=
-\sum_{1\le \alpha\le s+1}C_{s+1}^\alpha
\int \p^\alpha u\cdot\nabla \p^{s+1-\alpha}\theta\  \p^{s+1}\theta dx.$$
Depending on the derivatives $\p^{s+1}$, we will split the estimate into two cases: (1) $\p^{s+1}$ including at least one derivative on $x_1$  and (2) $\p^{s+1}=\p_2^{s+1}$.  For the case (1), it is easy to get
$$I\le C\|\omega\|_{\mathcal{H}^s}\|\p_1\theta\|_{H^s}\|\theta\|_{H^{s+1}}.$$
For the case (2), thanks to
$$
\sum_{1\le \alpha\le s+1}C_{s+1}^\alpha
\int \p_2^\alpha u_1 \p_1\p_2^{s+1-\alpha}\theta \p_2^{s+1}\theta dx
\le C\|\omega\|_{H^s}\|\p_1\theta\|_{H^s}\|\theta\|_{H^{s+1}}
$$
and
\begin{equation*}
\begin{aligned}
&\ \ \sum_{2\le \alpha\le s+1}C_{s+1}^\alpha
\int \p_2^\alpha u_2 \ \p_2^{s+2-\alpha}\theta\  \p_2^{s+1}\theta\ dx\\
=& -\sum_{2\le \alpha\le s+1}C_{s+1}^\alpha
\int \p_1\p_2^{\alpha-1} u_1\  \p_2^{s+2-\alpha}\theta \ \p_2^{s+1}\theta\ dx\\
=& \sum_{2\le \alpha\le s+1}C_{s+1}^\alpha(
\int\p_2^{\alpha-1} u_1\  \p_1\p_2^{s+2-\alpha}\theta \ \p_2^{s+1}\theta\ dx
+\int\p_2^{\alpha-1} u_1\ \p_2^{s+2-\alpha}\theta\  \p_1\p_2^{s+1}\theta\ dx)\\
=&\sum_{2\le \alpha\le s+1}C_{s+1}^\alpha(
\int\p_2^{\alpha-1} u_1\  \p_1\p_2^{s+2-\alpha}\theta \ \p_2^{s+1}\theta\ dx
-\int\p_2^\alpha u_1\  \p_2^{s+2-\alpha}\theta\  \p_1\p_2^s\theta\ dx\\
&-\int\p_2^{\alpha-1} u_1\  \p_2^{s+3-\alpha}\theta\  \p_1\p_2^s\theta\ dx)\\
\le & C\|\omega\|_{\mathcal{H}^s}\|\p_1\theta\|_{H^s}\|\theta\|_{H^{s+1}},
\end{aligned}
\end{equation*}
where we have used $\p_2u_2=-\p_1u_1$ and  integration by parts two times,
then
\begin{equation*}
\begin{aligned}
I=&\  - \sum_{1\le \alpha\le s+1} C_{s+1}^\alpha \left(\int \p_2^\alpha u_1\ \p_1\p_2^{s+1-\alpha}\theta\ \p_2^{s+1}\theta dx
+\int \p_2^\alpha u_2\ \p_2^{s+2-\alpha}\theta\ \p_2^{s+1}\theta dx\right)\\
\le&\  C\|\omega\|_{\mathcal{H}^s}\|\p_1\theta\|_{H^s}\|\theta\|_{H^{s+1}}
+I_1.
\end{aligned}
\end{equation*}
The estimate of $I_1$ will be given in  section \ref{s4}. So (\ref{3.1}) reduces to
\begin{equation}\label{3.0}
\frac{1}{2}\ddt(\|\omega\|_{\mathcal{H}^s}^2+\|\theta\|_{H^{s+1}}^2)
+\|\omega\|_{\mathcal{H}^s}^2
\le C (\|\omega\|_{\mathcal{H}^s}\|\p_1\theta\|_{H^s}\|\theta\|_{H^{s+1}}
+\|\omega\|_{\mathcal{H}^s}^3)+I_1.
\end{equation}
Next, we will find the dissipation of $\theta$. As a matter of fact, we have
$$\|\p_1\theta\|_{H^s}^2=(\p_t\omega|\p_1\theta)_{H^s}
+(u\cdot\nabla \omega|\p_1\theta)_{H^s}+(\omega|\p_1\theta)_{H^s}.
$$
Using (\ref{DBE})$_2$, we have
\begin{equation*}
\begin{aligned}
(\p_t\omega|\p_1\theta)_{H^s}
=&\ddt(\omega|\p_1\theta)_{H^s}-(\omega|\p_1\p_t\theta)_{H^s}\\
=&\ddt(\omega|\p_1\theta)_{H^s}
-(\omega|\p_1^2\Lambda^{-2}\omega)_{H^s}
+(\p_1(u\cdot\nabla \theta)|\omega)_{H^s}\\
=&\ddt(\omega|\p_1\theta)_{H^s}
+\|\p_1\Lambda^{-1}\omega\|_{H^s}^2
+(\p_1(u\cdot\nabla \theta)|\omega)_{H^s}.
\end{aligned}
\end{equation*}
So there exists a positive $C_0$ such that
\begin{equation}\label{3.3}
\begin{aligned}
\|\p_1\theta\|_{H^s}^2-\ddt(\omega|\p_1\theta)_{H^s}
=&(\omega|\p_1\theta)_{H^s}+\|\p_1\Lambda^{-1}\omega\|_{H^s}^2\\
&+(\p_1(u\cdot\nabla \theta)|\omega)_{H^s}+(u\cdot\nabla \omega|\p_1\theta)_{H^s}\\
\le& C_0\|\omega\|_{H^s}^2+\frac{1}{2}\|\p_1\theta\|_{H^s}^2
+(\p_1u\cdot\nabla \theta|\omega)_{H^s}\\
&+(u\cdot\nabla\p_1 \theta|\omega)_{\dot{H}^s}+(u\cdot\nabla \omega|\p_1\theta)_{\dot{H}^s},
\end{aligned}
\end{equation}
where we have used
$$(u\cdot\na \omega|\p_1\theta)+(u\cdot\na \p_1\theta|\omega)=0.$$
It is easy to get
$$
(\p_1u\cdot\nabla \theta|\omega)_{H^s}
\le C\|\omega\|_{H^s}^2\|\theta\|_{H^{s+1}}.
$$
Using the cancelation property
$$\int u\cdot\nabla \Lambda^s \omega\  \p_1\Lambda^s\theta dx
+\int u\cdot\nabla \p_1\Lambda^s\theta\ \Lambda^s\omega dx=0,
$$
then
\begin{equation*}
\begin{aligned}
&\ \ (u\cdot\nabla\p_1 \theta|\omega)_{\dot{H}^s}+(u\cdot\nabla \omega|\p_1\theta)_{\dot{H}^s}\\
=& \int [\Lambda^s,u\cdot\nabla]\p_1 \theta\Lambda^s\omega dx
+\int [\Lambda^s,u\cdot\nabla ]\omega \p_1\Lambda^s\theta dx\\
\le& \|[\Lambda^s,u\cdot\nabla]\p_1 \theta\|_{L^2}\|\omega\|_{H^s}
+\|[\Lambda^s,u\cdot\nabla ]\omega\|_{L^2} \|\p_1\theta\|_{H^s}.
\end{aligned}
\end{equation*}
Thanks to (\ref{kpv}) and interpolation inequalities, we have
\begin{equation*}
\begin{aligned}
\|[\Lambda^s,u\cdot\nabla]\p_1 \theta\|_{L^2}
\le& C(\|\nabla u\|_{L^\infty}\|\p_1 \theta\|_{H^s}
+\|\nabla \p_1\theta\|_{L^4}\|\Lambda^s u\|_{L^4})\\
\le& C(\|\nabla u\|_{L^\infty}\|\p_1 \theta\|_{H^s}
+\|\nabla \p_1\theta\|_{L^2}^\frac{1}{2}\|\Delta\p_1\theta\|_{L^2}^\frac{1}{2}
\|\Lambda^su\|_{L^2}^\frac{1}{2}\|\Lambda^s\omega\|_{L^2}^\frac{1}{2})\\
\le& C\|\omega\|_{H^s}\|\p_1 \theta\|_{H^s}.
\end{aligned}
\end{equation*}
Similarly, we can obtain
$$
\|[\Lambda^s,u\cdot\nabla ]\omega\|_{L^2}
\le C\|\omega\|_{H^s}^2.
$$
Thus,
$$(u\cdot\nabla\p_1 \theta|\omega)_{\dot{H}^s}+(u\cdot\nabla \omega|\p_1\theta)_{\dot{H}^s}
\le C\|\omega\|_{H^s}^2\|\p_1\theta\|_{H^s}.$$
Plugging  the estimates above
into (\ref{3.3}) yields
\begin{equation}\label{3.4}
\frac{1}{2}\|\p_1\theta\|_{H^s}^2
-\ddt(\omega|\p_1\theta)_{H^s}\le
C_0\|\omega\|_{H^s}^2+C\|\omega\|_{H^s}^2\|\theta\|_{H^{s+1}}.
\end{equation}
Multiplying (\ref{3.0}) by $2C_0$, and adding the resulting inequality to (\ref{3.4}), we can get
\begin{equation}\label{E1}
\begin{aligned}
&\ \ \ddt\{C_0(\|\omega\|_{\mathcal{H}^s}^2+\|\theta\|_{H^{s+1}}^2)
-(\omega|\p_1\theta)_{H^s}\}\\
&\ \ +C_0\|\omega\|_{\mathcal{H}^s}^2
+\frac{1}{2}\|\p_1\theta\|_{H^s}^2\\
\le&  C(\|\omega\|_{\mathcal{H}^s}+\|\theta\|_{H^{s+1}})
(\|\omega\|_{\mathcal{H}^s}^2+\|\p_1\theta\|_{H^s}^2)
+2C_0I_1.
\end{aligned}
\end{equation}
 Using
$$C_0(\|\omega\|_{\mathcal{H}^s}^2+\|\theta\|_{H^{s+1}}^2)
-(\omega|\p_1\theta)_{H^s}\thickapprox \|\omega\|_{\mathcal{H}^s}^2+\|\theta\|_{H^{s+1}}^2,$$
 and integrating (\ref{E1}) in time can lead to
the desired estimate (\ref{E0}).
\end{proof}
\vskip .2in
\section{The estimate of $\int_0^T\int \p_2u_2(\p_2^{s+1}\theta)^2 dxdt$}
\label{s4}
\vskip .3in
In this section, we bound $\int_0^T I_1(t)dt$,
\begin{lemma}\label{l4.1}
Under the conditions in Lemma \ref{l3.1}, then there holds
\begin{equation}\label{e2}
\int_0^T\int \p_2u_2(\p_2^{s+1}\theta)^2 dxdt\\
\le C(M(T)
+\mathfrak{K}(T)),
\end{equation}
where
\begin{equation}\label{K}
\mathfrak{K}(T)\stackrel{\rm def}{=}|\int_0^T\int u_2\p_2^2\theta  (\p_2^{s+1}\theta)^2dxdt|.
\end{equation}
 and
$$M(T)\stackrel{\rm def}{=}E(T)^\frac{1}{2}\left(E(T)+E_1(T)
+E(T)^\frac{3}{2}+E(T)E_1(T)\right).$$
Furthermore, we have
\begin{equation}\label{100}
E(T)+\frac{19}{20}E_1(T)
\le C(E(0)+M(T))+CE(T)^\frac{5}{3}
\int_0^T\|u_2\|_{L^\infty}^\frac{4}{3}dt.
\end{equation}
Here $C$ is a positive constant independent of $T$.
\end{lemma}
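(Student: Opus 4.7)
The plan is to use the transport equation for $\theta$ to convert $\partial_2 u_2$ inside $I_1$ into a time derivative plus nonlinear terms. Since $(\ref{DBE})_2$ gives $-\partial_2 u_2 = \partial_t\partial_2\theta + \partial_2(u\cdot\nabla\theta)$, the integral $\int_0^T I_1\,dt$ splits as $A + B$, where
\[
A = -\int_0^T\!\!\int \partial_t\partial_2\theta\,(\partial_2^{s+1}\theta)^2\,dx\,dt,\quad B = -\int_0^T\!\!\int \partial_2(u\cdot\nabla\theta)\,(\partial_2^{s+1}\theta)^2\,dx\,dt.
\]
For $A$ I would integrate by parts in $t$. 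The boundary contribution $\bigl[-\int\partial_2\theta\,(\partial_2^{s+1}\theta)^2\,dx\bigr]_0^T$ is bounded by $C(E(0)+E(T)^{3/2})$ via Sobolev embedding, and since $E(T)^{3/2}$ is a summand of $M(T)$ this fits into $C(E(0)+M(T))$. The remaining bulk term $2\int_0^T\!\!\int\partial_2\theta\,\partial_2^{s+1}\theta\,\partial_t\partial_2^{s+1}\theta\,dx\,dt$ is handled by substituting $\partial_t\partial_2^{s+1}\theta = -\partial_2^{s+1}(u\cdot\nabla\theta) + \partial_1\partial_2^s u_1$, where the second equality uses the divergence-free identity $\partial_2^{s+1}u_2 = -\partial_1\partial_2^s u_1$. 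After one or two further integrations by parts to push derivatives onto well-controlled factors, every resulting integral takes the form $\|\theta\|_{L^\infty_T H^{s+1}}\|\partial_1\theta\|_{L^2_T H^s}\|\omega\|_{L^2_T \mathcal{H}^s}$ or a similar product, hence is bounded by $E(T)^{1/2}E_1(T)$, a summand of $M(T)$. The companion piece $\partial_2^{s+1}(u\cdot\nabla\theta)$ is estimated identically using (\ref{kp})-(\ref{kpv}). For $B$, expanding
\[
\partial_2(u\cdot\nabla\theta) = \partial_2 u_1\,\partial_1\theta + u_1\,\partial_1\partial_2\theta + \partial_2 u_2\,\partial_2\theta + u_2\,\partial_2^2\theta,
\]
the first three summands each carry a $\partial_1$-factor (the third via $\partial_2 u_2 = -\partial_1 u_1$), so H\"older and Sobolev yield bounds that fit into $M(T)$; the fourth summand is precisely $\pm\mathfrak{K}(T)$. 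This gives (\ref{e2}).

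To derive (\ref{100}), combining (\ref{e2}) with (\ref{E0}) and using that $E(T)^{1/2}E_1(T)$ is already a summand of $M(T)$ produces
\[
E(T) + E_1(T) \le C\bigl(E(0) + M(T) + \mathfrak{K}(T)\bigr),
\]
so it suffices to absorb $\mathfrak{K}(T)$ against $\tfrac{1}{20}E_1(T)$ modulo the target $\|u_2\|_{L^\infty}^{4/3}$-integral. The main tool is the anisotropic Gagliardo-Nirenberg inequality
\[
\|g\|_{L^\infty(\R^2)}\le C\|g\|_{L^2}^{1/4}\|\partial_1 g\|_{L^2}^{1/4}\|\partial_2 g\|_{L^2}^{1/4}\|\partial_1\partial_2 g\|_{L^2}^{1/4},
\]
obtained by applying the one-dimensional bound $\|h\|_{L^\infty(\R)}^2 \le 2\|h\|_{L^2}\|h'\|_{L^2}$ successively in $x_1$ and $x_2$ (this is essentially the same structural inequality behind Lemma \ref{l2.1}). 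Specializing to $g = \partial_2^2\theta$ and using $s\ge 5$ yields $\|\partial_2^2\theta\|_{L^\infty}\le C\|\theta\|_{H^{s+1}}^{1/2}\|\partial_1\theta\|_{H^s}^{1/2}$. Substituting into $\mathfrak{K}(T)$ and applying H\"older in time with exponents $(4/3, 4)$ produces
\[
\mathfrak{K}(T) \le CE(T)^{5/4}\Bigl(\int_0^T\|u_2\|_{L^\infty}^{4/3}\,dt\Bigr)^{3/4}E_1(T)^{1/4},
\]
after which Young's inequality with the same exponents delivers $\mathfrak{K}(T)\le \tfrac{1}{20}E_1(T) + CE(T)^{5/3}\int_0^T\|u_2\|_{L^\infty}^{4/3}\,dt$. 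Absorbing $\tfrac{1}{20}E_1(T)$ into the LHS yields (\ref{100}).

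The principal obstacle will be the top-order velocity derivative $\partial_2^{s+1}u_2$ that appears in the $A$-bulk after integration by parts in $t$; naively this factor carries no intrinsic dissipation and would block the estimate. The structural fix is the divergence-free rewriting $\partial_2^{s+1}u_2 = -\partial_1\partial_2^s u_1$, which converts it into a quantity paid for by the damping-induced dissipation $\|\omega\|_{L^2_T \mathcal{H}^s}$ present on the left-hand side of Lemma \ref{l3.1}. A secondary but equally delicate point is the precise $\tfrac12$-$\tfrac12$ split of derivatives in the interpolation for $\|\partial_2^2\theta\|_{L^\infty}$: this exact balance between $\|\theta\|_{H^{s+1}}$ and $\|\partial_1\theta\|_{H^s}$ is what forces the subsequent H\"older/Young pair to be $(4/3, 4)$ and thereby produces the $\|u_2\|_{L^\infty}^{4/3}$-norm demanded by (\ref{100}).
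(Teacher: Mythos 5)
Your decomposition $A+B$ is exactly the paper's $J_1+J_2$, your treatment of the $A$-boundary term and of the $\partial_1\partial_2^s u_1$ piece of the $A$-bulk matches the paper, and your derivation of (\ref{100}) from (\ref{e2}) via the anisotropic interpolation $\|\partial_2^2\theta\|_{L^\infty}\le C\|\theta\|_{H^{s+1}}^{1/2}\|\partial_1\theta\|_{H^s}^{1/2}$ followed by H\"older--Young with exponents $(4/3,4)$ is precisely the paper's argument (\ref{4.1}). The gap is in your claim that the remaining integrals ``fit into $M(T)$'' by H\"older and Sobolev. Concretely, the third summand of your expansion of $B$ is
$\int_0^T\!\!\int \partial_2 u_2\,\partial_2\theta\,(\partial_2^{s+1}\theta)^2\,dx\,dt$, and rewriting $\partial_2u_2=-\partial_1u_1$ does \emph{not} rescue it: only the factor $\partial_1u_1$ is square-integrable in time (via $\|\omega\|_{L^2_T(H^s)}$), while $\partial_2\theta$ and $(\partial_2^{s+1}\theta)^2$ are merely $L^\infty_T$, so a direct H\"older bound is $\int_0^T\|\omega\|_{H^s}\,dt\cdot E(T)^{3/2}\le C\,T^{1/2}E_1(T)^{1/2}E(T)^{3/2}$, which grows with $T$ and cannot be absorbed into $M(T)$ with a $T$-independent constant. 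Lemma \ref{l2.1} does not apply either, since it would require $\partial_1\partial_2^{s+1}\theta\in L^2_T(H^1)$, i.e.\ $\partial_1\theta\in L^2_T(H^{s+2})$, one order beyond what $E_1(T)$ controls. The identical term reappears inside the $A$-bulk: the Leibniz expansion of $\partial_2^{s+1}(u\cdot\nabla\theta)$ produces $(s+1)\partial_2\theta\,\partial_2u_2\,(\partial_2^{s+1}\theta)^2$ (the paper's $K_3$), so ``one or two further integrations by parts'' is not enough there either.

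The missing idea is that this obstinate term must be handled by \emph{iterating} your own top-level trick: substitute $-\partial_2u_2=\partial_t\partial_2\theta+\partial_2(u\cdot\nabla\theta)$ a second time, integrate by parts in $t$ once more, and substitute the equation yet again for the resulting $\partial_t\partial_2^{s+1}\theta$; this is exactly what generates the $E(T)^{3/2}$ and $E(T)E_1(T)$ summands inside $M(T)=E(T)^{1/2}(\cdots)$, which your sketch never produces. A second, smaller omission: the $\alpha=0$ (pure transport) term of $\partial_2^{s+1}(u\cdot\nabla\theta)$ in the $A$-bulk, after integration by parts, yields $\tfrac12\int u\cdot\nabla\partial_2\theta\,(\partial_2^{s+1}\theta)^2$, whose $u_2\partial_2^2\theta$ part is again the $\mathfrak{K}(T)$ integrand --- so $\mathfrak{K}(T)$ arises from $A$ as well, not only from the fourth summand of $B$ as you assert. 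Neither point changes the architecture of your proof, but as written the estimate of $A$ and of the third summand of $B$ does not close.
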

\begin{rem}\label{r2}
The quantity $\int_0^T \|\p_2u_2\|_{L^\infty} dt$ seems difficult to be bounded, but instead section \ref{s5} gives a bound for $\int_0^T \|\p_2u_2\|_{L^\infty}^\frac{4}{3} dt$. This motivates the setting and results of section \ref{s4}.
%Thanks to section \ref{s3}, and using
%$$\int_0^T\int \p_2u_2(\p_2^{s+1}\theta)^2 dxdt
%\le \int_0^T\|\p_2u_2\|_{L^\infty}\|\theta\|_{H^{s+1}}^2 dt,$$
%then we shall bound $\int_0^T \|\p_2u_2\|_{L^\infty} dt$, which seems difficult, see %section \ref{s5}. In fact, following the techniques in  section \ref{s5}, we can only %bound the integral
%$\int_0^T\|\p_2u_2\|_{L^\infty}^\frac{4}{3}dt$. In the following proof, we shall %repeatedly  use $-u_2=\p_t\theta+u\cdot\na \theta$
\end{rem}
\begin{proof}
We shall find a new way to bound this integral. In fact, using $-u_2=\p_t\theta+u\cdot\nabla \theta$, we have
\begin{equation*}
\begin{aligned}
&\ \ \ \int_0^T\int \p_2u_2(\p_2^{s+1}\theta)^2 dxdt\\
=&-\int_0^T\int \p_2\p_t\theta (\p_2^{s+1}\theta)^2  dxdt-
\int_0^T\int \p_2(u\cdot\nabla \theta) (\p_2^{s+1}\theta)^2dxdt\\
\stackrel{\rm def}{=}& J_1+J_2.
\end{aligned}
\end{equation*}
{\bf $\bullet$ The estimate of $J_1$}
\vskip .1in
 Using $\p_t\theta=-u\cdot\nabla \theta-u_2$, we can obtain
\begin{equation*}
\begin{aligned}
 J_1=&-\int \p_2\theta (\p_2^{s+1}\theta)^2  dx \mid_0^T+2
 \int_0^T\int \p_2\theta \p_2^{s+1}\p_t\theta\p_2^{s+1}\theta dx dt\\
 \le& \|\theta\|_{L^\infty_T(H^{s+1})}^3
 +2(-J_{11}+J_{12})\\
 \le& E(T)^\frac{3}{2}+2(-J_{11}+J_{12}),
\end{aligned}
\end{equation*}
where
$$J_{11}\stackrel{\rm def}{=} \int_0^T\int \p_2\theta \p_2^{s+1}u_2\p_2^{s+1}\theta dxdt,\ J_{12}\stackrel{\rm def}{=}
-\int_0^T\int \p_2\theta \p_2^{s+1}(u\cdot\nabla \theta)\p_2^{s+1}\theta dxdt.$$
For $J_{11}$, using $\p_2u_2=-\p_1u_1$ and integrating by parts two times lead to
\begin{equation*}
\begin{aligned}
J_{11}=&-\int_0^T\int \p_2\theta \p_2^s\p_1u_1\p_2^{s+1}\theta dxdt\\
=&\int_0^T\int \p_1\p_2\theta \p_2^su_1\p_2^{s+1}\theta dxdt
+\int_0^T\int\p_2\theta \p_2^su_1\p_1\p_2^{s+1}\theta dxdt\\
=&\int_0^T\int \p_1\p_2\theta \p_2^su_1\p_2^{s+1}\theta dxdt
-\int_0^T\int\p_2^2\theta\p_2^su_1\p_1\p_2^s\theta dxdt\\
&-\int_0^T\int\p_2\theta\p_2^{s+1}u_1\p_1\p_2^s\theta dxdt\\
\le& C\|\p_1\theta\|_{L^2_T(H^s)}\|\omega\|_{L^2_T(H^s)}
\|\theta\|_{L^\infty_T(H^{s+1})}\\
\le& CE(T)^\frac{1}{2}E_1(T).
\end{aligned}
\end{equation*}
For $J_{12}$, we have
\begin{equation*}
\begin{aligned}
J_{12}=&-\sum_{0\le \alpha\le s+1}C_{s+1}^\alpha\int_0^T\int \p_2\theta \p_2^\alpha u\cdot\na \p_2^{s+1-\alpha}\theta \p_2^{s+1}\theta dxdt\\
=&-\sum_{1\le \alpha\le s+1}C_{s+1}^\alpha\int_0^T\int \p_2\theta \p_2^\alpha u\cdot\na \p_2^{s+1-\alpha}\theta \p_2^{s+1}\theta dxdt\\
&-\frac{1}{2}\int_0^T\int \p_2\theta  u\cdot\na (\p_2^{s+1}\theta)^2dxdt\\
=&-\sum_{1\le \alpha\le s+1}C_{s+1}^\alpha\int_0^T\int \p_2\theta \p_2^\alpha u\cdot\na \p_2^{s+1-\alpha}\theta \p_2^{s+1}\theta dxdt\\
&+\frac{1}{2}\int_0^T\int u\cdot\na\p_2\theta  (\p_2^{s+1}\theta)^2dxdt\\
\stackrel{\rm def}{=}& J_{121}+J_{122}.
\end{aligned}
\end{equation*}
By using $\p_2u_2=-\p_1u_1$ and integration by parts, we have
\begin{equation*}
\begin{aligned}
J_{121}=&-\sum_{1\le \alpha\le s+1}C_{s+1}^\alpha\int_0^T\int \{\p_2\theta  \p_2^\alpha u_1\  \p_1 \p_2^{s+1-\alpha}\theta\  \p_2^{s+1}\theta\\
&+ \p_2\theta\  \p_2^\alpha u_2\  \p_2^{s+2-\alpha}\theta\  \p_2^{s+1}\theta\}dxdt\\
=&-\sum_{1\le \alpha\le s+1}C_{s+1}^\alpha\int_0^T\int \{\p_2\theta \ \p_2^\alpha u_1\ \p_1 \p_2^{s+1-\alpha}\theta\  \p_2^{s+1}\theta\\
&-\p_2\theta\  \p_2^{\alpha-1}\p_1 u_1\  \p_2^{s+2-\alpha}\theta\  \p_2^{s+1}\theta\}dxdt\\
=&-\sum_{1\le \alpha\le s+1}C_{s+1}^\alpha\int_0^T\int \p_2\theta \ \p_2^\alpha u_1\ \p_1 \p_2^{s+1-\alpha}\theta\  \p_2^{s+1}\theta dxdt\\
&+\sum_{2\le \alpha\le s+1}C_{s+1}^\alpha\int_0^T\int
\p_2\theta\  \p_2^{\alpha-1}\p_1 u_1\  \p_2^{s+2-\alpha}\theta\  \p_2^{s+1}\theta dxdt\\
&-(s+1)\int_0^T\int
\p_2\theta\  \p_2 u_2\  (\p_2^{s+1}\theta)^2 dxdt\\
\stackrel{\rm def}{=}& K_1+K_2+K_3.
\end{aligned}
\end{equation*}
By using the previous approach,
one can get the estimate as follows:
\begin{equation}\label{App1}
K_1\le CE(T)E_1(T),\ \ K_2\le CE(T)E_1(T),
\end{equation}
whose proof is given  in the appendix.
As for the estimate of $K_3$,  using the equation of $\theta$ two times and $\p_2u_2=-\p_1u_1$, we have
\begin{equation*}
\begin{aligned}
K_3\le&(s+1)\int_0^T\int \p_2\theta\p_2\p_t\theta (\p_2^{s+1}\theta)^2dxdt\\
&+(s+1)\int_0^T\int \p_2\theta \p_2(u\cdot\na \theta)(\p_2^{s+1}\theta)^2dxdt\\
=& \frac{s+1}{2}\int (\p_2\theta)^2(\p_2^{s+1}\theta)^2dx\mid_0^T\\
&-(s+1)\int_0^T\int(\p_2\theta)^2 \p_2^{s+1}\theta\p_2^{s+1}\p_t\theta dxdt\\
&+(s+1)\int_0^T\int \p_2\theta \p_2(u\cdot\na \theta)(\p_2^{s+1}\theta)^2dxdt\\
\le& CE(T)^2+(s+1)\int_0^T\int(\p_2\theta)^2 \p_2^{s+1}\theta\p_2^{s}\p_1u_1 dxdt\\
&+(s+1)\int_0^T\int(\p_2\theta)^2 \p_2^{s+1}\theta\p_2^{s+1}(u\cdot\na \theta) dxdt\\
&-(s+1)\int_0^T\int \p_2\theta \p_2(u\cdot\na \theta)(\p_2^{s+1}\theta)^2dxdt\\
\stackrel{\rm def}{=}& CE(T)^2+\sum_{i=1}^3K_{3i}.
\end{aligned}
\end{equation*}
Integrating by parts two times, one can get
$$K_{31}\le CE(T)E_1(T).$$
Integrating by parts and using  Lemma \ref{l2.1} with $(p,q)
=(2,\infty)$ and $(p,q)
=(\infty,2)$, we have
\begin{equation*}
\begin{aligned}
K_{32}\le& C\left|\int_0^T\int (\p_2\theta)^2u\cdot\nabla (\p_2^{s+1}\theta)^2dxdt\right|\\
&+C\sum_{1\le \alpha\le s+1}\left|\int_0^T\int (\p_2\theta)^2
\p^\alpha u\cdot \nabla \p^{s+1-\alpha}\theta \p_2^{s+1}\theta  dxdt \right|\\
\le& C\left|\int_0^T\int u\cdot\nabla(\p_2\theta)^2 (\p_2^{s+1}\theta)^2dxdt\right|\\
&+C\sum_{1\le \alpha\le s-1}\left|\int_0^T\int (\p_2\theta)^2
\p^\alpha u\cdot \nabla \p^{s+1-\alpha}\theta \p_2^{s+1}\theta  dxdt \right|\\
&+C\sum_{s\le \alpha\le s+1}\left|\int_0^T\int (\p_2\theta)^2
\p^\alpha u\cdot \nabla \p^{s+1-\alpha}\theta \p_2^{s+1}\theta  dxdt \right|\\
\le& C\|\p_1\theta\|_{L^2_T(H^s)}\|\theta\|_{L^\infty_T(H^{3})}
\|u\|_{L^2_T(H^{s+1})}\|\theta\|_{L^\infty_T(H^{s+1})}^2\\
\le& CE(T)^\frac{3}{2}E_1(T).
\end{aligned}
\end{equation*}
Similarly, using  Lemma \ref{l2.1} with $(p,q)
=(2,\infty)$, we can get
\begin{equation*}
\begin{aligned}
K_{33}\le& C\left|\int_0^T\int \p_2\theta \p_2u\cdot\na \theta (\p_2^{s+1}\theta)^2 dxdt\right|\\
&+C\left|\int_0^T\int \p_2\theta u\cdot\na \p_2\theta (\p_2^{s+1}\theta)^2 dxdt\right|\\
\le& C\|\p_1\theta\|_{L^2_T(H^s)}\|\theta\|_{L^\infty_T(H^{3})}
\|u\|_{L^2_T(H^3)}\|\theta\|_{L^\infty_T(H^{s+1})}^2\\
\le& CE(T)^\frac{3}{2}E_1(T).
\end{aligned}
\end{equation*}
So we can get the estimate of $K_3$,  and combining  with the estimates of $K_1$ and $K_2$ yields
$$J_{121}\le C\left(E(T)^2+E(T)E_1(T)+E(T)^\frac{3}{2}E_1(T)\right).$$
For the estimate of $J_{122}$, we have
\begin{equation*}
\begin{aligned}
J_{122}\le& C|\int_0^T\int u_1\p_1\p_2\theta  (\p_2^{s+1}\theta)^2dxdt|\\
&+|\int_0^T\int u_2\p_2^2\theta  (\p_2^{s+1}\theta)^2dxdt|\\
\le& C\|u\|_{L^2_T(L^\infty)}\|\p_1\theta\|_{L^2_T(H^3)}
\|\theta\|_{L^\infty_T(H^{s+1})}^2+C\mathfrak{K}(T)\\
\le& CE(T)E_1(T)+C\mathfrak{K}(T).
\end{aligned}
\end{equation*}
Hence,
\begin{equation}\label{J1}
J_1\le CE(T)^\frac{1}{2}\left(E(T)+E(T)^\frac{1}{2}E_1(T)+E(T)^\frac{3}{2}+E(T)E_1(T)\right)
+C\mathfrak{K}(T).
\end{equation}
\vskip .1in
{\bf $\bullet$ The estimate of $J_2$} We have
\begin{equation*}
\begin{aligned}
J_2\le& \left|\int_0^T\int \p_2u_1\p_1 \theta (\p_2^{s+1}\theta)^2dxdt\right|\\
&+\left|\int_0^T\int \p_2u_2\p_2 \theta (\p_2^{s+1}\theta)^2dxdt\right|.
\end{aligned}
\end{equation*}
It is easy to get
$$\left|\int_0^T\int \p_2u_1\p_1 \theta (\p_2^{s+1}\theta)^2dxdt\right|\le E(T)E_1(T),$$
while the second term can be bounded as the estimate of $K_3$. So we can get
$$J_2\le CE(T)\left(E_1(T)+E(T)^\frac{1}{2}E_1(T)+E(T)\right).$$
We can get the desired result (\ref{e2}) by combining with the estimates of $J_1$ and $J_2$.
\vskip.1in
Next, we show (\ref{100}).
One can deduce from Lemma \ref{l3.1} and Lemma \ref{l4.1} that
$$E(T)+E_1(T)\le C\left(E(0)+M(T)+\mathfrak{K}(T)\right).$$
It is easy to see that $M(T)$ is a good term, since $M(T)$ can be bounded by $E(T)^\alpha E_1(T)^\beta$ for $\alpha+\beta>1$ and $(\alpha,\beta)\in [0,\infty)^2$.  Next, it suffices to show the estimate of $\mathfrak{K}(T)$. Thanks to
$$\|\p_2^2\theta\|_{L^\infty}\le C\|\p_2^2\theta\|_{H^1}^\frac{1}{2}\|\p_1\p_2^2\theta\|_{H^1}^\frac{1}{2},$$
 we have
\begin{equation}\label{4.1}
\begin{aligned}
\mathfrak{K}(T)\le & \int_0^T\|u_2\|_{L^\infty}\|\p_2^2\theta\|_{L^\infty}
\|\theta\|_{H^{s+1}}^2dt\\
\le& C\int_0^T\|u_2\|_{L^\infty}
\|\p_2^2\theta\|_{H^1}^\frac{1}{2}\|\p_1\p_2^2\theta\|_{H^1}^\frac{1}{2}
\|\theta\|_{H^{s+1}}^2dt\\
\le& CE(T)^\frac{5}{4}\|\p_1\theta\|_{L^2_T(H^s)}^\frac{1}{2}
\|u_2\|_{L^\frac{4}{3}_T(L^\infty)}\\
\le& CE(T)^\frac{5}{3}
\|u_2\|_{L^\frac{4}{3}_T(L^\infty)}^\frac{4}{3}+\frac{1}{20}
E_1(T),
\end{aligned}
\end{equation}
which yields (\ref{100}).
\end{proof}
To close the estimate of (\ref{100}), we shall bound  $\int_0^T\|u_2\|_{L^\infty}^\frac{4}{3}dt$, which is the main goal in the following section.
\vskip .2in
\section{Spectral analysis and the estimate of $\int_0^T\|u_2\|_{L^\infty}^\frac{4}{3}dt$}
\label{s5}
\vskip .3in
In this section, we get the expression of solution and then show the estimate of
$\int_0^T\|u_2\|_{L^\infty}^\frac{4}{3}dt$.
\begin{lemma}\label{ll1}
Under the conditions in Lemma \ref{l3.1}, then there holds
\begin{equation}\label{ADDD2}
\int_0^T\|u_2\|_{L^\infty}^\frac{4}{3}dt
\le C\mathfrak{M}(T),
\end{equation}
where
$$
\begin{aligned}
\mathfrak{M}(T)
=&
CE(T)^\frac{1}{2}
+CE_1(T)+C(E_1(T)^\frac{1}{2}+E_2(T)^\frac{1}{2})E_1(T)^\frac{1}{2}\\
&+C\left(E_1(T)^\frac{1}{2}+E_2(T)^\frac{1}{2}\right)
E(T)^\frac{1}{4}E_1(T)^\frac{1}{4},
\end{aligned}
$$
\begin{equation}\label{901}
E_2(T)=\|\Lambda^{-2}\omega\|_{L^2_T(L^2)}^2.
\end{equation}
 In addition, we have
\begin{equation}\label{200}
E(T)+\frac{19}{20}E_1(T)
\le C(E(0)+M(T))+CE(T)^\frac{5}{3}
\mathfrak{M}(T).
\end{equation}
Here $C$ is a positive constant independent of $T$.
\end{lemma}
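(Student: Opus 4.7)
The plan has two parts: (i) establish (\ref{ADDD2}) by exploiting the spectral structure of the linearized $(u_2,\theta)$ system, and (ii) deduce (\ref{200}) by substituting (\ref{ADDD2}) into (\ref{100}).

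For (i), I first derive a closed linear system for $(u_2,\theta)$. Applying $-\p_1\Lambda^{-2}$ to the vorticity equation and using $u_2 = -\p_1\Lambda^{-2}\omega$ gives
$$\p_t u_2 + u_2 + \p_1^2\Lambda^{-2}\theta = \p_1\Lambda^{-2}(u\cdot\na\omega),$$
which, paired with $\p_t\theta + u_2 = -u\cdot\na\theta$, forms a $2\times 2$ system. Its Fourier symbol matrix has characteristic polynomial $\lambda^2 + \lambda + \xi_1^2/|\xi|^2 = 0$ and eigenvalues $\lambda_\pm(\xi) = \tfrac{1}{2}(-1\pm\sqrt{1 - 4\xi_1^2/|\xi|^2})$, both with negative real part. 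Diagonalizing via left eigenvectors provides an explicit representation of $u_2$ as a sum of two semigroup contributions ($e^{\lambda_+ t}$ slowly decaying where $|\xi_1|\ll|\xi|$; $e^{\lambda_- t}$ rapidly decaying) acting on the initial data, plus Duhamel integrals of the nonlinear forcings $\p_1\Lambda^{-2}(u\cdot\na\omega)$ and $u\cdot\na\theta$.

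The next step is to convert this representation into the desired bound on $\int_0^T\|u_2\|_{L^\infty}^{4/3}dt$. I would combine the Gagliardo--Nirenberg interpolation $\|u_2\|_{L^\infty}\le C\|u_2\|_{L^2}^{1/2}\|\Dd u_2\|_{L^2}^{1/2}$ with the algebraic identity $\Dd u_2 = \p_1\omega$ and the Fourier interpolation $\|u_2\|_{L^2}\le\|\Lambda^{-2}\omega\|_{L^2}^{1/2}\|\omega\|_{L^2}^{1/2}$ (coming from $\xi_1^2/|\xi|^4\le|\xi|^{-2}$ and Cauchy--Schwarz). Applying H\"older in time against the controls $\|\omega\|_{L^2_T(L^2)}, \|\p_1\omega\|_{L^2_T(L^2)}\le CE_1(T)^{1/2}$, $\|\Lambda^{-2}\omega\|_{L^2_T(L^2)} = E_2(T)^{1/2}$, $\|\omega\|_{L^\infty_T(L^2)}\le CE(T)^{1/2}$, together with the $L^4$-in-time interpolation $\|\p_1\omega\|_{L^4_T(L^2)}\le CE(T)^{1/4}E_1(T)^{1/4}$, one recovers the four summands of $\mathfrak{M}(T)$: the isolated $E(T)^{1/2}$ piece arises from the semigroup acting on the initial data combined with $\mathcal H^s\hookrightarrow L^\infty$ (valid for $s\ge 5$); the $E_1(T)$ piece comes from the linear $\theta$-forcing in the Duhamel integral; and the two bilinear combinations come from the quadratic Duhamel integrals of $u\cdot\na\omega$ and $u\cdot\na\theta$, depending on whether the fast or the slow factor is placed in $L^2_T$ or $L^\infty_T$.

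The main obstacle is controlling the slow mode: on the anisotropic frequency region $|\xi_1|\ll|\xi|$, one has $\lambda_+\sim -\xi_1^2/|\xi|^2$, which offers essentially no decay in $t$, so a naive Duhamel argument cannot absorb the slow contribution in $L^{4/3}_T$. The compensating mechanism is that $u_2 = -\p_1\Lambda^{-2}\omega$ itself suppresses exactly these low-$|\xi_1|/|\xi|$ contributions by the factor $|\xi_1|/|\xi|^2$; fully quantifying this suppression requires $L^2$-control of $\Lambda^{-2}\omega$, which is precisely the role of $E_2(T)$ and, at the level of initial data, of the condition $\omega_0\in\dot{H}^{-2}(\R^2)$ flagged in the introduction. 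Once (\ref{ADDD2}) is proven, (\ref{200}) follows at once: substituting $\int_0^T\|u_2\|_{L^\infty}^{4/3}dt\le C\mathfrak{M}(T)$ into the right-hand side of (\ref{100}) produces (\ref{200}).
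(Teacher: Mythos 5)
Your overall architecture (diagonalize, write $u_2$ by Duhamel, exploit the extra factor $\xi_1/|\xi|^2$ against the degenerate slow mode, then plug (\ref{ADDD2}) into (\ref{100}) to get (\ref{200})) matches the paper, and your identification of the slow mode $\lambda_+\sim-\xi_1^2/|\xi|^2$ as the obstacle and of $E_2(T)$/$\omega_0\in\dot H^{-2}$ as the compensating ingredient is correct. But the quantitative core of your argument does not close as written. The step ``apply H\"older in time against the controls $\|\omega\|_{L^2_T(L^2)}$, $\|\p_1\omega\|_{L^2_T(L^2)}$, $\|\Lambda^{-2}\omega\|_{L^2_T(L^2)}$, \dots'' cannot produce a $T$-independent bound on $\int_0^T\|u_2\|_{L^\infty}^{4/3}\,dt$: all the dissipative quantities live in $L^2_T$ and the energies in $L^\infty_T$, and since $4/3<2$ no H\"older combination of such norms controls an $L^{4/3}_T$ norm uniformly in $T$ (the exponents sum to less than $1$; equivalently, $L^2(0,\infty)\not\subset L^{4/3}(0,\infty)$). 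The missing integrability must come from genuine time decay of the semigroup kernels. The paper extracts it by optimizing the anisotropic factor, e.g.\ $\tfrac{\xi_1^2}{|\xi|^2}e^{-\frac{\xi_1^2}{|\xi|^2}t}\le C\langle t\rangle^{-1}$ on $D_1$, which gives $\|\tfrac{\xi_1}{|\xi|}M_1(t)\|_{L^\infty_\xi}\le C\langle t\rangle^{-3/2}$ and $\|\tfrac{\xi_1}{|\xi|^2}M_2(t)\|_{L^\infty_\xi}\le C\langle t\rangle^{-1}$, and then closes the Duhamel terms with Young's convolution inequality $\|f\star g\|_{L^{4/3}_T}\le\|f\|_{L^{4/3}}\|g\|_{L^1_T}$ (using $\langle t\rangle^{-1}\in L^{4/3}$). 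Your Gagliardo--Nirenberg route $\|u_2\|_{L^\infty}\le C\|u_2\|_{L^2}^{1/2}\|\Delta u_2\|_{L^2}^{1/2}$ bypasses this mechanism entirely and is therefore insufficient; the paper instead bounds $\|u_2\|_{L^\infty}\le\|\widehat{u_2}\|_{L^1_\xi}$ and works on the Fourier side over the three regions $|\xi|\ge3|\xi_1|$, $2|\xi_1|\le|\xi|<3|\xi_1|$, $|\xi|<2|\xi_1|$.

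A second concrete gap is the forcing term $u\cdot\nabla\theta$ in the Duhamel integral: its component $u_2\p_2\theta$ involves $\p_2\theta$, which has \emph{no} $L^2_T$ bound from $E_1(T)$ (only $\p_1\theta$ is dissipated), so the generic statement ``place the fast or slow factor in $L^2_T$ or $L^\infty_T$'' fails for exactly this term. The paper resolves it with the identity (\ref{tech1}),
\begin{equation*}
\widehat{u_2\p_2\theta}(\xi)=i\xi_1\,\mathcal{F}\bigl[\Lambda^{-2}\omega\,\p_2\theta\bigr](\xi)+\mathcal{F}\bigl[\Lambda^{-2}\omega\,\p_1\p_2\theta\bigr](\xi),
\end{equation*}
which either transfers the derivative to the output frequency $\xi_1$ (where it reinforces the decaying multiplier, at the cost of needing $\|\Lambda^{-2}\omega\|_{L^2_T(L^2)}=E_2(T)^{1/2}$ together with an anisotropic $L^\infty$ bound on $\p_2\theta$) or converts $\p_2\theta$ into the good quantity $\p_1\p_2\theta$. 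This is precisely where the two mixed terms $(E_1^{1/2}+E_2^{1/2})E_1^{1/2}$ and $(E_1^{1/2}+E_2^{1/2})E^{1/4}E_1^{1/4}$ in $\mathfrak{M}(T)$ originate; your proposal does not supply a substitute for this step. Your part (ii) is fine.
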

\begin{proof}
The proof consists of the following two subsections.
We need first a diagonalization process.
\subsection{Spectral analysis}
To obtain  the estimate of $\int_0^T\|u_2\|_{L^\infty}^\frac{4}{3}dt$, we shall  first investigate the spectrum properties to the following system:
\begin{equation} \label{LBE}
\left\{
\begin{array}{l}
\partial_t \omega + \omega   = \p_1\theta+G,  \\
\partial_t \theta   =\p_1\Lambda^{-2}\omega+H,\\
G=-u\cdot\nabla \omega,\ H=-u\cdot\nabla \theta.
\end{array}
\right.
\end{equation}
Denote
$$A\stackrel{\rm def}{=}\left(
             \begin{array}{cc}
               -1 & -i\xi_1 \\
               -\frac{i\xi_1}{|\xi|^2} & 0 \\
             \end{array}
           \right),$$
then we can get from (\ref{LBE}) that
\begin{equation}\label{5.1}
\p_t\left(
        \begin{array}{c}
          \widehat{\omega} \\
          \widehat{\theta} \\
        \end{array}
      \right)(\xi)
=A \left(
        \begin{array}{c}
          \widehat{\omega} \\
          \widehat{\theta} \\
        \end{array}
      \right)(\xi)
      +\left(
        \begin{array}{c}
          \widehat{G} \\
          \widehat{H} \\
        \end{array}
      \right)(\xi).
\end{equation}
One  can get the eigenvalues of the matrix $A$ as follows:
 \begin{equation}\label{eige}
\lambda_\pm=\left\{
\begin{array}{l}
 \frac{-1\pm |\xi|^{-1}\sqrt{|\xi|^2-4\xi_1^2}}{2},\ \ {\rm when} \ \  |\xi|\ge 2|\xi_1|,  \\
\frac{-1\pm i|\xi|^{-1}\sqrt{4\xi_1^2-|\xi|^2}}{2},\ \ {\rm when} \ \  |\xi|<2|\xi_1|
\end{array}
\right.
\end{equation}
and
\begin{equation}\label{5.2}
\mathbb{P}^{-1}A\mathbb{P}=\left(
                               \begin{array}{cc}
                                 \lambda_+ & 0 \\
                                 0 &  \lambda_-\\
                               \end{array}
                             \right),
\end{equation}
where the matrixes  $\mathbb{P}$ and $\mathbb{P}^{-1}$ are given by
$$\mathbb{P}=\left(
               \begin{array}{cc}
                 \lambda_+|\xi|^2 & \lambda_-|\xi|^2 \\
                 -i\xi_1 & -i\xi_1 \\
               \end{array}
             \right),
             \ \
            \mathbb{P}^{-1}=\frac{1}{|A|} \left(
               \begin{array}{cc}
                 -i\xi_1 & -\lambda_-|\xi|^2 \\
                 i\xi_1 & \lambda_+|\xi|^2 \\
               \end{array}
             \right).
$$
Thanks to (\ref{5.1}) and (\ref{5.2}), denote
$$W\stackrel{\rm def}{=}-i\xi_1 \widehat{\omega}-\lambda_-|\xi|^2
\widehat{\theta},\ \  V\stackrel{\rm def}{=}i\xi_1\widehat{\omega}
+\lambda_+|\xi|^2\widehat{\theta},$$
we have
\begin{equation*}
\left\{
\begin{array}{l}
\partial_t W   = \lambda_+ W+(-i\xi_1\widehat{G}-\lambda_-|\xi|^2\widehat{H}),  \\
\partial_t V   =\lambda_- V+i\xi_1\widehat{G}+\lambda_+|\xi|^2\widehat{H},\\
\end{array}
\right.
\end{equation*}
which is equal to
$$W=e^{\lambda_+t}W_0+\int_0^t e^{\lambda_+(t-\tau)}
(-i\xi_1\widehat{G}-\lambda_-|\xi|^2\widehat{H})d\tau,$$
$$
V=e^{\lambda_-t}V_0+\int_0^t e^{\lambda_-(t-\tau)}(i\xi_1\widehat{G}+\lambda_+|\xi|^2\widehat{H} )d\tau.
$$
So we can get
\begin{equation}\label{integral}
\begin{aligned}
\widehat{\omega}(\xi,t)=&M_1(t)\widehat{\omega}_0(\xi)
+M_2(t)\widehat{\theta}_0(\xi)\\
&+\int_0^t M_1(t-\tau)\widehat{G}(\xi)d\tau+
\int_0^t M_2(t-\tau)\widehat{H}(\xi)d\tau,
\end{aligned}
\end{equation}
where
\begin{equation}\label{M1}
M_1(t)\stackrel{\rm def}{=}\left\{
\begin{array}{l}
-\frac{|\xi|
(\lambda_-e^{\lambda_-t}-\lambda_+
e^{\lambda_+t})}{\sqrt{|\xi|^2-4\xi_1^2}},\ \ {\rm when} \ \ |\xi|\ge 2|\xi_1|,  \\
-\frac{|\xi|
(\lambda_-e^{\lambda_-t}-\lambda_+
e^{\lambda_+t})}{i\sqrt{4\xi_1^2-|\xi|^2}},\ \ {\rm when} \ \ |\xi|< 2|\xi_1|,
\end{array}
\right.
\end{equation}
and
\begin{equation}\label{M2}
M_2(t)\stackrel{\rm def}{=}\left\{
\begin{array}{l}
\frac{i\xi_1|\xi|
(e^{\lambda_-t}-
e^{\lambda_+t})}{\sqrt{|\xi|^2-4\xi_1^2}},\ \ {\rm when} \ \ |\xi|\ge 2|\xi_1|,  \\
\frac{\xi_1|\xi|
(e^{\lambda_-t}-
e^{\lambda_+t})}{\sqrt{4\xi_1^2-|\xi|^2}},\ \ {\rm when} \ \ |\xi|< 2|\xi_1|.
\end{array}
\right.
\end{equation}
\subsection{The estimate of $\int_0^T\|u_2\|_{L^\infty}^\frac{4}{3}dt$}
Using $\|f\|_{L^\infty}\le \|\widehat{f}(\xi)\|_{L^1}$, we have
\begin{equation}\label{5.3}
\begin{aligned}
\int_0^T\|u_2\|_{L^\infty}^\frac{4}{3}dt
\le& \int_0^T\|\widehat{u_2}(\xi)\|_{L^1}^\frac{4}{3}dt\\
\le& \int_0^T\|\widehat{u_2}(\xi)\|_{L^1(D_1)}^\frac{4}{3}dt\\
&+\int_0^T\|\widehat{u_2}(\xi)\|_{L^1(D_2)}^\frac{4}{3}dt\\
&+\int_0^T\|\widehat{u_2}(\xi)\|_{L^1(D_3)}^\frac{4}{3}dt,
\end{aligned}
\end{equation}
where
\begin{equation*}
\begin{aligned}
D_1\stackrel{\rm def}{=}&\{\xi\in \R^2:\ |\xi|\ge 3|\xi_1|\},\\
D_2\stackrel{\rm def}{=}&\{\xi\in \R^2:\ 2|\xi_1|\le |\xi|< 3|\xi_1|\},\\
D_3\stackrel{\rm def}{=}&\{\xi\in \R^2:\ |\xi|< 2|\xi_1|\}.
\end{aligned}
\end{equation*}
{\bf Case 1 $\xi \in D_1$}
\vskip .1in
  In this case, one can get from (\ref{eige}), (\ref{M1}) and (\ref{M2}) that
  \begin{equation}\label{P1}
  \left\{
\begin{array}{l}
\lambda_+\le -\frac{\xi_1^2}{|\xi|^2},\ \lambda_-\le -\frac{1}{2},  \\
M_1(t)\le C(e^{-\frac{1}{2}t}+\frac{\xi_1^2}{|\xi|^2}
e^{-\frac{\xi_1^2}{|\xi|^2}t}),\\
M_2(t)\le C|\xi_1|(e^{-t}+e^{-\frac{\xi_1^2}{|\xi|^2}t}).
\end{array}
\right.
\end{equation}
Consequently, using (\ref{integral}), we can obtain
\begin{equation*}
\begin{aligned}
\left\|\|\frac{\xi_1}{|\xi|^2}\widehat{\omega}\|_{L^1(D_1)}
\right\|_{L^\frac{4}{3}_T} \le & \left\|\|\frac{\xi_1}{|\xi|^2}M_1(t)\widehat{\omega}_0\|_{L^1(D_1)}\right\|_{L^\frac{4}{3}_T}
+\left\|\|\frac{\xi_1}{|\xi|^2}M_2(t)
\widehat{\theta}_0\|_{L^1(D_1)}\right\|_{L^\frac{4}{3}_T}\\
&+\left\|\int_0^t\|\frac{\xi_1}{|\xi|^2}
M_1(t-\tau)\widehat{G}\|_{L^1(D_1)}d\tau\right\|_{L^\frac{4}{3}_T}\\
&+\left\|\int_0^t\|\frac{\xi_1}
{|\xi|^2}M_2(t-\tau)\widehat{H}\|_{L^1(D_1)}d\tau\right\|_{L^\frac{4}{3}_T}\\
\stackrel{\rm def}{=}&L_1+L_2+L_3+L_4.
\end{aligned}
\end{equation*}
Using (\ref{P1}), we have
$$\|\frac{\xi_1}{|\xi|}M_1(t)\|_{L^\infty_\xi}\le
C\langle t\rangle^{-\frac{3}{2}},\
\|\frac{\xi_1}{|\xi|^2}M_2(t)\|_{L^\infty_\xi}\le
C\langle t\rangle^{-1},
$$
which yields
$$L_1\le C\left\|\langle t\rangle^{-\frac{3}{2}}\|\widehat{\Lambda^{-1}
\omega}\|_{L^1}\right\|_{L^\frac{4}{3}_T}
\le C\left\|\langle t\rangle^{-\frac{3}{2}}\|u\|_{H^{1+\eta}}\right\|_{L^\frac{4}{3}_T}
\le C\|u\|_{L^\infty_T(H^{1+\eta})}\le CE(T)^\frac{1}{2},$$
$$
L_2\le C\left\|\langle t\rangle^{-1}\|\widehat{\theta}\|_{L^1}\right\|_{L^\frac{4}{3}_T}
\le C\left\|\langle t\rangle^{-1}\|\theta\|_{H^{1+\eta}}\right\|_{L^\frac{4}{3}_T}
\le C\|\theta\|_{L^\infty_T(H^{1+\eta})}\le CE(T)^\frac{1}{2},
$$
where $0<\eta\ll1$,  and
$$L_3\le C\left\|\int_0^t \langle t-\tau\rangle^{-\frac{3}{2}}
\|u\otimes \omega\|_{H^{1+\eta}}d\tau\right\|_{L^\frac{4}{3}_T}.
$$
Let
$$f_1(s)=\langle s\rangle^{-\frac{3}{2}}\chi_{(0,\infty)}(s)\in L^p \ (p\ge 1),\
g_1(s)=\|u\otimes \omega\|_{H^{1+\eta}}\chi_{(0,T)},$$
by Young's inequality, (\ref{kp}) and $H^{1+\eta}(\R^2)\hookrightarrow L^\infty(\R^2)$, then  we have
\begin{equation*}
\begin{aligned}
L_3
\le &\|f_1\star g_1\|_{L^\frac{4}{3}(\R)}\\
\le& C\|f_1\|_{L^\frac{4}{3}(\R)}\|g\|_{L^1(\R)}\\
\le& C\left\|\|u\|_{L^\infty}\|\omega\|_{H^{1+\eta}}   +\|\omega\|_{L^\infty}\|u\|_{H^{1+\eta}}\right\|_{L^1_T}\\
\le& C\|u\|_{L^2_T(H^{1+\eta})}\|\omega\|_{L^2_T(H^{1+\eta})}\\
\le& CE_1(T).
\end{aligned}
\end{equation*}
For the last term $L_4$,  we shall first give some analysis on $\widehat{u\cdot\na \theta}$. Applying $u\cdot\nabla =u_1\p_1+u_2\p_2$, we can get
$$L_4\le C\left\|\int_0^t\langle t-\tau\rangle^{-1} \|\widehat{u_1\p_1\theta}\|_{L^1(D_1)}d\tau\right\|_{L^\frac{4}{3}_T}
+C\left\|\int_0^t  \|\frac{\xi_1}{|\xi|^2}M_2(t-\tau)\widehat{u_2\p_2\theta}\|_{L^1(D_1)}d\tau\right\|_{L^\frac{4}{3}_T}
\stackrel{\rm def}{=}L_{41}+L_{42},
$$
where
$L_{41}$ can be bounded like $L_3$, indeed, denote
$$f_2(s)=\langle s\rangle^{-1}\chi_{(0,\infty)}(s)\in L^p \ (p> 1),\
g_2(s)=\|u_1\p_1 \theta\|_{H^{1+\eta}}\chi_{(0,T)},$$
similarly, we also have
\begin{equation*}
\begin{aligned}
L_{41}\le& C\|f_2\star g_2\|_{L^\frac{4}{3}(\R)}\\
\le&
C\|f_2\|_{L^\frac{4}{3}(\R)}\|g_2\|_{L^1(\R)}\\
\le& C\|u_1\|_{L^2_T(H^{1+\eta})}\|\p_1\theta\|_{L^2_T(H^{1+\eta})}\\
\le& CE_1(T).
\end{aligned}
\end{equation*}
However, this strategy can not be used to   the estimate of $L_{42}$, due to the fact that
$$\|\p_2\theta\|_{L^2_T(H^{1+\eta})} $$
 can not be bounded by $E_1(T)$. In order to avoid this bad term,  we shall seek a different approach. We begin with the analysis of $\widehat{u_2\p_2\theta}$. Using $u_2=-\p_1\Lambda^{-2}\omega$, one has
 \begin{equation}\label{tech1}
 \begin{aligned}
 \widehat{u_2\p_2\theta}=&\int (\xi_1-\eta_1)|\xi-\eta|^{-2}
 \widehat{\omega}(\xi-\eta)\eta_2\widehat{\theta}(\eta)d\eta\\
 =&\xi_1\int |\xi-\eta|^{-2}
 \widehat{\omega}(\xi-\eta)\eta_2\widehat{\theta}(\eta)d\eta\\
 &-\int |\xi-\eta|^{-2}
 \widehat{\omega}(\xi-\eta)\eta_1\eta_2\widehat{\theta}(\eta)d\eta\\
 =&i\xi_1\int \widehat{\Lambda^{-2}\omega}(\xi-\eta)\widehat{\p_2\theta}(\eta)d\eta\\
 &+\int \widehat{\Lambda^{-2}\omega}(\xi-\eta)
 \widehat{\p_1\p_2\theta}(\eta)d\eta\\
 =&i\xi_1\mathcal{F}[\Lambda^{-2}\omega\p_2\theta]
 (\xi)+\mathcal{F}[\Lambda^{-2}\omega\p_1\p_2\theta](\xi),
 \end{aligned}
 \end{equation}
 with (\ref{M2}) leads to
\begin{equation*}
\begin{aligned}
\|\frac{\xi_1}{|\xi|^2}M_2(t-\tau)\widehat{u_2\p_2\theta}\|_{L^1(D_1)}
\le& C\langle t-\tau\rangle^{-\frac{3}{2}}\|\mathcal{F}
[\Lambda(\Lambda^{-2}\omega\p_2\theta)]\|_{L^1}\\
&+C\langle t-\tau\rangle^{-1}\|\mathcal{F}
[\Lambda^{-2}\omega\p_1\p_2\theta]\|_{L^1}
\\
\le& C\langle t-\tau\rangle^{-\frac{3}{2}}
\|\Lambda^{-2}\omega\ \p_2\theta\|_{H^{2+\eta}}\\
&+C\langle t-\tau\rangle^{-1}\|\Lambda^{-2}\omega\p_1\p_2\theta\|_{H^{1+\eta}}\\
\le& C\langle t-\tau\rangle^{-\frac{3}{2}}
\|\Lambda^{-2}\omega\ \p_2\theta\|_{H^{2+\eta}}\\
&+C\langle t-\tau\rangle^{-1}\|\Lambda^{-2}\omega\|_{H^{1+\eta}}
\|\p_1\p_2\theta\|_{H^{1+\eta}}\\
\stackrel{\rm def}{=}& N_1(t)+N_2(t).
  \end{aligned}
 \end{equation*}
 So we have
 \begin{equation}\label{5.4}
 L_{42}\le \|N_1(t)\|_{L^\frac{4}{3}_T}
 +\|N_2(t)\|_{L^\frac{4}{3}_T}.
 \end{equation}
 Denote $g_3(s)=\|\Lambda^{-2}\omega\|_{H^{1+\eta}}
\|\p_1\p_2\theta\|_{H^{1+\eta}}\chi_{(0,T)}$,
  by Young's inequality and (\ref{kp}), we have
\begin{equation*}
\begin{aligned}
\|N_2(t)\|_{L^\frac{4}{3}_T}
\le& C \|f_2\star g_3\|_{L^\frac{4}{3}}\\
\le& C \|f_2\|_{L^\frac{4}{3}(\R)}\|g_3\|_{L^1(\R)}\\
\le& C\|\Lambda^{-2}\omega\|_{L^2_T(H^{1+\eta})}
\|\p_1\p_2\theta\|_{L^2_T(H^{1+\eta})}\\
\le& C\|\Lambda^{-2}\omega\|_{L^2_T(H^{2+\eta})}E_1(T)^\frac{1}{2}\\
\le& C(E_2(T)^\frac{1}{2}+E_1(T)^\frac{1}{2})E_1(T)^\frac{1}{2}.
\end{aligned}
\end{equation*}
 \begin{rem}\label{ADD1}
 $E_2(T)$ plays an essential role in the estimate of $L_{42}$.  The regularity index -2 in (\ref{901}) may be slightly improved to $-1-\delta$ for some $\delta\in (0,1)$, but  it seems difficult to obtain the case $\delta=0$.
 \end{rem}
Denote
$g_4(s)=\|\Lambda^{-2}\omega\p_2\theta\|_{H^{2+\eta}}\chi_{(0,T)}$,
by Young's inequality,  (\ref{kp})
 and
 $$\|h\|_{L^\infty}\le C\|\|h\|_{L^2_{x_1}}^\frac{1}{2}
 \|\p_1h\|_{L^2_{x_1}}^\frac{1}{2}\|_{L^\infty_{x_2}}\le
 C\|h\|_{H^1}^\frac{1}{2}\|\p_1h\|_{H^1}^\frac{1}{2}, $$
 we can get
\begin{equation*}
\begin{aligned}
\|N_1(t)\|_{L^\frac{4}{3}_T}
\le& \|f_1\star g_4\|_{L^\frac{4}{3}}\\
\le&
C\|f_1\|_{L^1}\left\|\|\Lambda^{-2}\omega\  \p_2\theta\|_{H^{2+\eta}}\right\|_{L^\frac{4}{3}_T}\\
\le&C\|f_1\|_{L^1}\left\|\|\Lambda^{-2}\omega\  \p_2\theta\|_{H^3}\right\|_{L^\frac{4}{3}_T}\\
\le& C \left\| \|\Lambda^{-2}\omega\|_{L^2}\|\p_2\theta\|_{W^{3,\infty}}    \right\|_{L^\frac{4}{3}_T}\\
&+\left\| \|\Lambda^{-2} \omega\|_{H^{3}}\|\p_2\theta\|_{L^\infty} \right\|_{L^\frac{4}{3}_T}\\
\le& C\left\| \|\Lambda^{-2}\omega\|_{L^2}\|\p_2\theta\|_{H^{3}}^\frac{1}{2}
 \|\p_1\p_2\theta\|_{H^{3}}^\frac{1}{2}   \right\|_{L^\frac{4}{3}_T}\\
 &+\left\| \|\Lambda^{-2} \omega\|_{H^{3}}\|\p_2\theta\|_{H^1}^\frac{1}{2} \|\p_1\p_2\theta\|_{H^1}^\frac{1}{2}\right\|_{L^\frac{4}{3}_T}\\
 \le& C\|\Lambda^{-2}\omega\|_{L^2_T(L^2)}
 \|\p_2\theta\|_{L^\infty_T(H^{3})}^\frac{1}{2}
 \|\p_1\p_2\theta\|_{L^2_T(H^{3})}^\frac{1}{2}\\
 &+C\|\Lambda^{-2} \omega\|_{L^2_T(H^{3})}
 \|\p_2\theta\|_{L^\infty_T(H^1)}^\frac{1}{2}
  \|\p_1\p_2\theta\|_{L^2_T(H^1)}^\frac{1}{2}\\
\le& C\left(E_1(T)^\frac{1}{2}+E_2(T)^\frac{1}{2}\right)
E(T)^\frac{1}{4}E_1(T)^\frac{1}{4}.
\end{aligned}
\end{equation*}
Thus we have
\begin{equation*}
\begin{aligned}
L_{42}\le& C(E_1(T)^\frac{1}{2}+E_2(T)^\frac{1}{2})E_1(T)^\frac{1}{2}\\
&+C\left(E_1(T)^\frac{1}{2}+E_2(T)^\frac{1}{2}\right)
E(T)^\frac{1}{4}E_1(T)^\frac{1}{4},
\end{aligned}
\end{equation*}
with the estimate of $L_{41}$ leads to
\begin{equation*}
\begin{aligned}
L_4\le& C(E_1(T)^\frac{1}{2}+E_2(T)^\frac{1}{2})E_1(T)^\frac{1}{2}\\
&+C\left(E_1(T)^\frac{1}{2}+E_2(T)^\frac{1}{2}\right)
E(T)^\frac{1}{4}E_1(T)^\frac{1}{4}+CE_1(T).
\end{aligned}
\end{equation*}
Therefore, we can get
\begin{equation}\label{Domain1}
\int_0^T\|\widehat{u_2}(\xi)\|_{L^1(D_1)}^\frac{4}{3}dt
\le\mathfrak{M}(T).
\end{equation}
\vskip .1in
{\bf Case 2 $\xi\in D_2$}
\vskip .1in
In this case, we have
$$\frac{1}{9}<\frac{\xi_1^2}{|\xi|^2}\le \frac{1}{4},$$
and then
\begin{equation*}
\begin{aligned}
|M_1(t)|\le& \left|\frac{(\lambda_--\lambda_+)e^{\lambda_-t}}
{|\xi|^{-1}\sqrt{|\xi|^2-4\xi_1^2}}\right|
+\left|\frac{\lambda_+(e^{\lambda_+t}-e^{\lambda_-t})}
{|\xi|^{-1}\sqrt{|\xi|^2-4\xi_1^2}} \right|\\
\le& Ce^{-\frac{1}{2}t}+C\frac{|\lambda_+||\lambda_+-\lambda_-|t
e^{\lambda_+t}}{|\xi|^{-1}\sqrt{|\xi|^2-4\xi_1^2}}\\
\le&  Ce^{-\frac{1}{2}t}+C\frac{\xi_1^2}{|\xi|^2}te^{e^{-\frac{\xi_1^2}{|\xi|^2}
t  }}\\
\le& Ce^{-\frac{1}{18}t},
\end{aligned}
\end{equation*}
and
\begin{equation*}
\begin{aligned}
|M_2(t)|\le& C\frac{|\xi_1||\lambda_+-\lambda_-|te^{\lambda_+t}}
{|\xi|^{-1}\sqrt{|\xi|^2-4\xi_1^2}}\\
\le& C|\xi_1|te^{-\frac{\xi_1^2}{|\xi|^2}t}\\
\le& C|\xi_1|e^{-\frac{1}{18}t}.
\end{aligned}
\end{equation*}
It means $M_i(t)$ $(i=1,2)$ admits a faster decay than the {\bf Case 1}. So one can obtain by following the previous procedure that
$$\int_0^T\|\widehat{u_2}(\xi)\|_{L^1(D_2)}^\frac{4}{3}dt\le \mathfrak{M}(T).$$
\vskip .1in
{\bf Case 3 $\xi\in D_3$}
\vskip .1in
We have
\begin{equation*}
\begin{aligned}
|M_1(t)|\le& \left|\frac{|\xi|(\lambda_--\lambda_+)e^{\lambda_-t}}
{\sqrt{4\xi_1^2-|\xi|^2}}   \right|+\left|\frac{|\xi|\lambda_+(e^{\lambda_+t}-e^{\lambda_-t})}
{\sqrt{4\xi_1^2-|\xi|^2}} \right|\\
\le& C|e^{\lambda_-t}|
+C\left|\frac{e^{-\frac{1}{2}t}
(e^{i\frac{|\xi|^{-1}\sqrt{4\xi_1^2-|\xi|^2}}{2}t}-
e^{-i\frac{|\xi|^{-1}\sqrt{4\xi_1^2-|\xi|^2}}{2}t})}
{|\xi|^{-1}\sqrt{4\xi_1^2-|\xi|^2}}   \right|\\
\le& Ce^{-\frac{1}{2}t}+Ce^{-\frac{1}{2}t}\left|
\frac{\sin (\frac{|\xi|^{-1}\sqrt{4\xi_1^2-|\xi|^2}}{2}t)}
{|\xi|^{-1}\sqrt{4\xi_1^2-|\xi|^2}} \right|\\
\le & Ce^{-\frac{1}{2}t}+Ce^{-\frac{1}{2}t}t\\
\le& Ce^{-\frac{1}{4}t},
\end{aligned}
\end{equation*}
where we have used
$$\left|\frac{\sin x}{x}\right|\le 1.$$
Similarly, we also have
\begin{equation*}
\begin{aligned}
|M_2(t)|\le& C|\xi_1|e^{-\frac{1}{2}t}
\left| \frac{e^{i\frac{|\xi|^{-1}\sqrt{4\xi_1^2-|\xi|^2}}{2}t}
-e^{-i\frac{|\xi|^{-1}\sqrt{4\xi_1^2-|\xi|^2}}{2}t}}
{|\xi|^{-1}\sqrt{4\xi_1^2-|\xi|^2}} \right|\\
\le& C|\xi_1|e^{-\frac{1}{2}t}t\\
\le& C|\xi_1|e^{-\frac{1}{4}t}.
\end{aligned}
\end{equation*}
Then one can get by following the previous procedure line by line that
$$\int_0^T\|\widehat{u_2}(\xi)\|_{L^1(D_3)}^\frac{4}{3}dt\le \mathfrak{M}(T).$$
So
$$\int_0^T\|\widehat{u_2}(\xi)\|_{L^1}^\frac{4}{3}dt\le \mathfrak{M}(T).$$
Inserting the estimate  above in (\ref{100}), we can get (\ref{200}).
\end{proof}
\vskip .2in
\section{Energy estimate II}
\label{s6}
\vskip .3in
Due to the appearance of  $E_2(T)$, the  estimate (\ref{200}) is not closed. The following lemma will help us close this estimate.
\begin{lemma}\label{ll100}
Let $(\omega,\theta)$ be sufficiently smooth functions which solves
(\ref{DBE}) and satisfy $(\omega_0,\theta_0)\in
\mathbb{H}^s(\R^2)\times \mathcal{H}^{s+1}(\R^2)$, then there holds
\begin{equation}\label{E0000}
\begin{aligned}
&\ \ \|\omega\|_{L^\infty_T(\dot{H}^{-2})}^2
+\|\theta\|_{L^\infty_T(\dot{H}^{-1})}^2
+\int_0^T(\|\omega\|_{\dot{H}^{-2}}^2
+\|\p_1\theta\|_{\dot{H}^{-2}}^2)dt\\
\le& C(\|\omega_0\|_{\dot{H}^{-2}}^2+\|\theta_0\|_{\dot{H}^{-1}}^2)
+(A(T)^\frac{1}{2}+A(T))A_1(T),
\end{aligned}
\end{equation}
where $C$ is a positive constant independent of $T$.
\end{lemma}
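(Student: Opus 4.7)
The plan is to mirror Lemma~\ref{l3.1} at the negative-index level: take the $\dot H^{-2}$ inner product of the $\omega$-equation with $\omega$ and the $\dot H^{-1}$ inner product of the $\theta$-equation with $\theta$, then sum. The crucial observation is that the exact cancellation of the linear coupling survives at low regularity:
\[
(\p_1\theta\,|\,\omega)_{\dot H^{-2}}+(\p_1\Lambda^{-2}\omega\,|\,\theta)_{\dot H^{-1}}=0,
\]
as one checks in Fourier using $u_2=-\p_1\Lambda^{-2}\omega$. This produces
\[
\tfrac12\tfrac{d}{dt}\bigl(\|\omega\|_{\dot H^{-2}}^2+\|\theta\|_{\dot H^{-1}}^2\bigr)+\|\omega\|_{\dot H^{-2}}^2=-(u\cdot\na\omega\,|\,\omega)_{\dot H^{-2}}-(u\cdot\na\theta\,|\,\theta)_{\dot H^{-1}}.
\]
To recover the $\p_1\theta$ dissipation in $\dot H^{-2}$ I would repeat the trick behind (\ref{3.3}): starting from $\|\p_1\theta\|_{\dot H^{-2}}^2=(\p_t\omega\,|\,\p_1\theta)_{\dot H^{-2}}+(u\cdot\na\omega\,|\,\p_1\theta)_{\dot H^{-2}}+(\omega\,|\,\p_1\theta)_{\dot H^{-2}}$, transfer the time derivative onto $\theta$, substitute $\p_t\theta=\p_1\Lambda^{-2}\omega-u\cdot\na\theta$, and extract the perfect time derivative $\tfrac{d}{dt}(\omega\,|\,\p_1\theta)_{\dot H^{-2}}$ together with the sign-definite term $\|\p_1\Lambda^{-3}\omega\|_{L^2}^2\ge 0$ arising from $-(\omega\,|\,\p_1^2\Lambda^{-2}\omega)_{\dot H^{-2}}$. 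Combining the two identities with a large multiplier, using Young to absorb $\tfrac12\|\p_1\theta\|_{\dot H^{-2}}^2$ into the dissipation, and integrating in time reproduces the left-hand side of (\ref{E0000}).

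The remaining task is to estimate the four cubic nonlinearities $(u\cdot\na\omega\,|\,\omega)_{\dot H^{-2}}$, $(u\cdot\na\theta\,|\,\theta)_{\dot H^{-1}}$, $(\p_1(u\cdot\na\theta)\,|\,\omega)_{\dot H^{-2}}$, and $(u\cdot\na\omega\,|\,\p_1\theta)_{\dot H^{-2}}$. I would handle them using $\na\cdot u=0$ to pass to $\na\cdot(uf)$-form where useful, and --- crucially for the $(u\cdot\na\theta\,|\,\cdot)$ pieces, in which $\theta$ has no $L^2_T$-dissipation away from the $\p_1$-direction --- the splitting $u\cdot\na\theta=u_1\p_1\theta-\p_1\Lambda^{-2}\omega\,\p_2\theta$ combined with the integration-by-parts trick of (\ref{tech1}) to trade the forbidden $\p_2\theta$-factor for a manipulable $\p_1$-derivative. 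After H\"older in time, the $L^\infty_T$-factors ($\|u\|_{L^\infty_T L^2}=\|\omega\|_{L^\infty_T\dot H^{-1}}$, $\|\p_2\theta\|_{L^\infty_T L^2}$, and various $L^2$-norms of $\omega$ and $\theta$) sit inside $A(T)^{1/2}$ by space-interpolation within $\mathbb H^s$ and $\mathcal H^{s+1}$, while the $L^2_T$-factors ($\|\omega\|_{L^2_T\dot H^{-2}}$, $\|\omega\|_{L^2_T L^2}$, $\|u\|_{L^2_T H^{1+\eta}}$, $\|\p_1\theta\|_{L^2_T\dot H^{-1}}$, etc.) are all bounded by $A_1(T)^{1/2}$. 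Multiplying out produces the $(A(T)^{1/2}+A(T))A_1(T)$ on the right-hand side.

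The main obstacle is precisely this control of the transport nonlinearities at negative Sobolev indices, since Kato--Ponce-type commutators are unavailable below $L^2$. The resolution rests on two structural facts special to our setting: the divergence-free constraint, which lets us rewrite $u\cdot\na$ as $\na\cdot(u\,\cdot)$; and the identity $u_2=-\p_1\Lambda^{-2}\omega$, which lets us swap an unwanted $\p_2\theta$-factor for a $\p_1$-derivative via integration by parts, exactly as exploited in (\ref{tech1}). Together these ensure that every factor forced into $L^2_T$ is dissipation-friendly, and they are exactly why the ``unnatural'' conditions $\omega_0\in\dot H^{-2}$, $\theta_0\in\dot H^{-1}$ built into $\mathbb H^s$ and $\mathcal H^{s+1}$ are indispensable for this energy estimate.
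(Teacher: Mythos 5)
Your plan is correct and is essentially the paper's own proof: the same $\dot H^{-2}\times\dot H^{-1}$ energy identity with the surviving linear cancellation, the same cross-term $\frac{d}{dt}(\omega|\p_1\theta)_{\dot H^{-2}}$ device for the $\p_1\theta$ dissipation, the same large-multiplier combination, and the same reliance on divergence-freeness together with $u_2=-\p_1\Lambda^{-2}\omega$ to force a $\p_1$ onto $\theta$ in the transport terms. The only cosmetic difference is that the paper treats $(u\cdot\na\theta|\theta)_{\dot H^{-1}}$ by setting $-\Delta f=\theta$ and integrating by parts with $\p_2u_2=-\p_1u_1$ rather than invoking the (\ref{tech1})-style splitting, but this is the same mechanism in different notation.
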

\begin{proof}
Using energy method, we have
$$\frac{1}{2}\ddt\|\omega\|_{\dot{H}^{-2}}^2
+\|\omega\|_{\dot{H}^{-2}}^2=-(u\cdot\na \omega|\omega)_{\dot{H}^{-2}}+(\p_1\theta|\omega)_{\dot{H}^{-2}}$$
and
$$\frac{1}{2}\ddt\|\theta\|_{\dot{H}^{-1}}^2=-(u\cdot\na \theta|\theta)_{\dot{H}^{-1}}+(\p_1\Lambda^{-2}\omega|\theta)_{\dot{H}^{-1}}.$$
Thanks to  the cancelation property
$$(\p_1\theta|\omega)_{\dot{H}^{-2}}
+(\p_1\Lambda^{-2}\omega|\theta)_{\dot{H}^{-1}}=0,
$$
and
\begin{equation*}
\begin{aligned}
\Big|(u\cdot\na \omega|\omega)_{\dot{H}^{-2}}\Big|\le& C
\|\na\times(u\cdot\na u)\|_{\dot{H}^{-2}}\|\omega\|_{\dot{H}^{-2}}
\le C\|u\otimes u\|_{L^2}\|\omega\|_{\dot{H}^{-2}}\\
\le& C\|u\|_{L^4}^2\|\omega\|_{\dot{H}^{-2}}
\le C\|u\|_{L^2}\|\omega\|_{L^2}\|\omega\|_{\dot{H}^{-2}},
\end{aligned}
\end{equation*}
we  suffice to   show the estimate of $(u\cdot\na \theta|\theta)_{\dot{H}^{-1}}$. Denote $-\Delta f=\theta$,
using integration by parts many times,
 we have
\begin{equation*}
\begin{aligned}
(u\cdot\na \theta|\theta)_{\dot{H}^{-1}}
=&\int u\cdot\na \theta (-\Delta)^{-1}\theta
=-\int u\cdot\na \Delta f f\\
=&-\int \p_i (u\cdot\na\p_if) f+\int \p_iu\cdot\na \p_if f\\
=& \int u\cdot\na\p_if \p_i f+\int \p_iu\cdot\na \p_if f\\
=&\int \p_iu\cdot\na \p_if f
= \int \p_iu\cdot\na f \p_if \\
=& \int \p_iu_1 \p_1f \p_if+\int\p_1u_2 \p_2f \p_1f
+\int\p_2u_2 \p_2f \p_2f\\
=& \int \p_iu_1 \p_1f \p_if+\int\p_1u_2 \p_2f \p_1f
-\int\p_1u_1 \p_2f \p_2f\\
=&\int \p_iu_1 \p_1f \p_if+\int\p_1u_2 \p_2f \p_1f
+2\int u_1 \p_1\p_2f \p_2f,
\end{aligned}
\end{equation*}
then by interpolation inequality,
\begin{equation*}
\begin{aligned}
|(u\cdot\na \theta|\theta)_{\dot{H}^{-1}}|
\le&\  C\|\omega\|_{L^4}\|\p_1f\|_{L^2}\|\na f\|_{L^4}
+C\|u_1\|_{L^\infty}\|\p_1\p_2f\|_{L^2}\|\p_2f\|_{L^2}\\
=&\  C\|\omega\|_{L^4}\|\p_1\Lambda^{-2}\theta\|_{L^2}\|\na \Lambda^{-2}\theta\|_{L^4}
+C\|u_1\|_{L^\infty}\|\p_1\p_2\Lambda^{-2}\theta\|_{L^2}
\|\p_2\Lambda^{-2}\theta\|_{L^2}\\
\le&\  C\|u\|_{H^2}\|\p_1\Lambda^{-2}\theta\|_{H^1}
\|\Lambda^{-1}\theta\|_{H^2}.
\end{aligned}
\end{equation*}
By interpolation inequality, thus we can get
\begin{equation}\label{end1}
\begin{aligned}
\frac{1}{2}\ddt(\|\omega\|_{\dot{H}^{-2}}^2+\|\theta\|_{\dot{H}^{-1}}^2)
+\|\omega\|_{\dot{H}^{-2}}^2
\le& C\|u\|_{L^2}\|\omega\|_{L^2}\|\omega\|_{\dot{H}^{-2}}\\
&+C\|u\|_{H^2}\|\p_1\Lambda^{-2}\theta\|_{H^1}
\|\Lambda^{-1}\theta\|_{H^2}\\
\le& C(\|\omega\|_{\dot{H}^{-2}}+\|\Lambda^{-1}\theta\|_{H^2})\\
&\times(\|u\|_{H^2}^2+\|\p_1\theta\|_{\dot{H}^{-2}}^2+\|\p_1\theta\|_{L^2}^2).
\end{aligned}
\end{equation}
Then there exists a positive constant $C_1$ such that
\begin{equation}\label{end2}
\begin{aligned}
\|\p_1\theta\|_{\dot{H}^{-2}}^2-\ddt(\omega|\p_1\theta)_{\dot{H}^{-2}}
=&(\omega|\p_1\theta)_{\dot{H}^{-2}}-
(\p_1^2\Lambda^{-2}\omega|\omega)_{\dot{H}^{-2}}\\
&+(u\cdot\na \omega|\p_1\theta)_{\dot{H}^{-2}}
+(\p_1(u\cdot\nabla \theta)|\omega)_{\dot{H}^{-2}}\\
\le& C_1\|\omega\|_{\dot{H}^{-2}}^2+\frac{1}{2}
\|\p_1\theta\|_{\dot{H}^{-2}}^2\\
&+\|u\cdot\na \omega\|_{\dot{H}^{-2}}^2+\|\p_1{\rm div}(u\otimes\theta)\|_{\dot{H}^{-2}}^2\\
\le& C_1\|\omega\|_{\dot{H}^{-2}}^2+\frac{1}{2}
\|\p_1\theta\|_{\dot{H}^{-2}}^2\\
&+\|u\cdot\na u\|_{\dot{H}^{-1}}^2
+\|u\otimes\theta\|_{L^2}^2\\
\le& C_1\|\omega\|_{\dot{H}^{-2}}^2+\frac{1}{2}
\|\p_1\theta\|_{\dot{H}^{-2}}^2\\
&+\|u\otimes u\|_{L^2}^2
+\|u\|_{L^\infty}^2\|\theta\|_{L^2}^2\\
\le& C_1\|\omega\|_{\dot{H}^{-2}}^2+\frac{1}{2}
\|\p_1\theta\|_{\dot{H}^{-2}}^2\\
&+\|u\|_{L^2}^2\|\omega\|_{L^2}^2
+\|u\|_{L^\infty}^2\|\theta\|_{L^2}^2.
\end{aligned}
\end{equation}
Multiplying (\ref{end1}) by $2C_1$, and adding to (\ref{end2}), we can get
\begin{equation}\label{end3}
\begin{aligned}
&\ \ \ \ddt\{C_1(\|\omega\|_{\dot{H}^{-2}}^2+\|\theta\|_{\dot{H}^{-1}}^2)
-(\omega|\p_1\theta)_{\dot{H}^{-2}}\}\\
&+C_1\|\omega\|_{\dot{H}^{-2}}^2+\frac{1}{2}
\|\p_1\theta\|_{\dot{H}^{-2}}^2\\
\le& (\|\omega\|_{\dot{H}^{-2}}+\|\Lambda^{-1}\theta\|_{H^2}
+\|(\theta,\omega)\|_{L^2}^2)\\
&\times(\|u\|_{H^2}^2+
\|\p_1\theta\|_{\dot{H}^{-2}}^2+\|\p_1\theta\|_{L^2}^2).
\end{aligned}
\end{equation}
Integrating (\ref{end3}) in time, and using
$$2C_1(\|\omega\|_{\dot{H}^{-2}}^2+\|\theta\|_{\dot{H}^{-1}}^2)
-(\omega|\p_1\theta)_{\dot{H}^{-2}}\thickapprox \|\omega\|_{\dot{H}^{-2}}^2+\|\theta\|_{\dot{H}^{-1}}^2,$$
 we have
\begin{equation}\label{300}
\begin{aligned}
&\ \ \|\omega\|_{L^\infty_T(\dot{H}^{-2})}^2
+\|\theta\|_{L^\infty_T(\dot{H}^{-1})}^2
+\int_0^T(\|\omega\|_{\dot{H}^{-2}}^2
+\|\p_1\theta\|_{\dot{H}^{-2}}^2)dt\\
\le& C(\|\omega_0\|_{\dot{H}^{-2}}^2+\|\theta_0\|_{\dot{H}^{-1}}^2)
+\int_0^T(\|\omega\|_{\dot{H}^{-2}}+\|\Lambda^{-1}\theta\|_{H^2}
+\|(\theta,\omega)\|_{L^2}^2)\\
&\times(\|u\|_{H^2}^2+
\|\p_1\theta\|_{\dot{H}^{-2}}^2+\|\p_1\theta\|_{L^2}^2)dt.
\end{aligned}
\end{equation}
Using H\"{o}lder's inequality, we complete the proof of Lemma \ref{ll100}.
\end{proof}
\vskip .2in
\section{ Proof of Theorem \ref{t1.1}}
\label{s7}
\vskip .3in
In this section, we give the proof of Theorem \ref{t1.1}. Local well-posedness can be proved by using standard method.  Here we only show the global a priori bound. Combining with (\ref{200}) and Lemma \ref{ll100}, we can get
\begin{equation}\label{end}
\begin{aligned}
A(T)+A_1(T)
\le& C_2A(0)+C_2A(T)\{A(T)+A_1(T)+A(T)^\frac{3}{2}+A(T)A_1(T)\}\\
&+C_2(A(T)^\frac{5}{3}+A_1(T))\{A(T)^\frac{1}{2}+A_1(T)+A(T)^\frac{1}{4}
A_1(T)^\frac{3}{4}\}.
\end{aligned}
\end{equation}
Denote
$$\bar{T}\stackrel{\rm def}{=}\{T\in (0,T^\star):\ A(T)+A_1(T)\le 4C_2 A(0)\},$$
where $T^\star>0$ is the maximal existence time of the local solution. Assume $\bar{T}<T^\star$. Thanks to (\ref{small}), we can get from (\ref{end}) that
$$A(T)+A_1(T)\le 2C_2A(0),$$
which yields a contradiction with $\bar{T}<T^\star$ by the continuous arguments. Thus we can get
$\bar{T}=T^\star$, which leads to the desired result.

\vskip .4in
\section*{Acknowledgements}
We thank Dr. Zhuan Ye for the helpful comments. We thank the anonymous referees for the careful reading and helpful comments.
This work was supported by the NSF of the Jiangsu Higher Education Institutions of China
(18KJB110018), the NSF of Jiangsu Province BK20180721.

\vskip .4in
\appendix
\section{}
\begin{proof} [Proof of (\ref{App1})]
We only give the estimate of $K_2$, since the estimate of $K_1$
is similar. Integrating by parts, we have
\begin{equation*}
\begin{aligned}
K_2=&\sum_{2\le \alpha\le s+1}C_{s+1}^\alpha\int_0^T\int
\p_2\theta\  \p_2^{\alpha-1}\p_1 u_1\  \p_2^{s+2-\alpha}\theta\  \p_2^{s+1}\theta dxdt\\
=&-\sum_{2\le \alpha\le s+1}C_{s+1}^\alpha\int_0^T\int
\p_1\p_2\theta\  \p_2^{\alpha-1} u_1\  \p_2^{s+2-\alpha}\theta\  \p_2^{s+1}\theta dxdt\\
&-\sum_{2\le \alpha\le s+1}C_{s+1}^\alpha\int_0^T\int
\p_2\theta\  \p_2^{\alpha-1} u_1\  \p_1\p_2^{s+2-\alpha}\theta\  \p_2^{s+1}\theta dxdt\\
&-\sum_{2\le \alpha\le s+1}C_{s+1}^\alpha\int_0^T\int
\p_2\theta\  \p_2^{\alpha-1} u_1\  \p_2^{s+2-\alpha}\theta\  \p_1\p_2^{s+1}\theta dxdt\\
=& \widetilde{K_1}+\widetilde{K_2}+\widetilde{K_3}.
\end{aligned}
\end{equation*}
For $\widetilde{K_1}$, we have
\begin{equation*}
\begin{aligned}
\widetilde{K_1}
=&-(\sum_{2\le \alpha\le 3}+\sum_{4\le \alpha\le s+1})
C_{s+1}^\alpha\int_0^T\int
\p_1\p_2\theta\  \p_2^{\alpha-1} u_1\  \p_2^{s+2-\alpha}\theta\  \p_2^{s+1}\theta dxdt\\
=&\widetilde{K_{11}}+\widetilde{K_{12}}.
\end{aligned}
\end{equation*}
It follows by  H\"{o}lder's inequality and $\|f\|_{L^\infty}
\le\ C\|f\|_{H^2}$ that
\begin{equation*}
\begin{aligned}
\widetilde{K_{11}}\le&\ C\sum_{2\le \alpha\le 3}
\|\p_1\p_2\theta\|_{L^2_T(L^\infty)}\|\p_2^{\alpha-1} u_1\|_{L^2_T(L^\infty)}  \|\p_2^{s+2-\alpha}\theta\|_{L^\infty_T(L^2)}
\|\p_2^{s+1}\theta\|_{L^\infty_T(L^2)}\\
\le&\ C
\|\p_1\theta\|_{L^2_T(H^3)}\| u_1\|_{L^2_T(H^4)}  \|\theta\|_{L^\infty_T(H^{s+1})}^2\\
\le&\ CE(T)E_1(T)
\end{aligned}
\end{equation*}
and
\begin{equation*}
\begin{aligned}
\widetilde{K_{12}}
\le&\ C\sum_{4\le \alpha\le s+1}\|\p_1\p_2\theta\|_{L^2_T(L^\infty)}\|\p_2^{\alpha-1} u_1\|_{L^2_T(L^2)}  \|\p_2^{s+2-\alpha}\theta\|_{L^\infty_T(L^\infty)}
\|\p_2^{s+1}\theta\|_{L^\infty_T(L^2)}\\
\le&\ C\|\p_1\theta\|_{L^2_T(H^3)}\| u_1\|_{L^2_T(H^s)}  \|\theta\|_{L^\infty_T(H^{s+1})}^2\\
\le&\ CE(T)E_1(T).
\end{aligned}
\end{equation*}
Thus, we get
\begin{equation}\label{aa}
\widetilde{K_1}
\le\ CE(T)E_1(T).
\end{equation}
Along the similar arguments can yield the estimate of $\widetilde{K_2}$:
$$\widetilde{K_2}
\le\ CE(T)E_1(T).$$
For $\widetilde{K_3}$, integrating by parts again, one can deduce that
\begin{equation}\label{aa2}
\begin{aligned}
\widetilde{K_3}=& \sum_{2\le \alpha\le s+1}C_{s+1}^\alpha\int_0^T\int
\p_2^2\theta\  \p_2^{\alpha-1} u_1\  \p_2^{s+2-\alpha}\theta\  \p_1\p_2^s\theta dxdt\\
& +\sum_{2\le \alpha\le s+1}C_{s+1}^\alpha\int_0^T\int
\p_2\theta\  \p_2^{\alpha} u_1\  \p_2^{s+2-\alpha}\theta\  \p_1\p_2^{s}\theta dxdt\\
& +\sum_{2\le \alpha\le s+1}C_{s+1}^\alpha\int_0^T\int
\p_2\theta\  \p_2^{\alpha-1} u_1\  \p_2^{s+3-\alpha}\theta\  \p_1\p_2^{s}\theta dxdt\\
\end{aligned}
\end{equation}
Like the previous way yielding (\ref{aa}),  the three parts
on the right hand side of (\ref{aa2}) can also be bounded by
$CE(T)E_1(T)$. Collecting the above estimates can yield the desired bound of $K_2$.
\end{proof}

\vskip .4in

\end{document}